\def\P{{\mathbb P}}
\def\E{{\mathbb E}}
\def\N{{\mathbb N}}
\def\R{{\mathbb R}}
\def\Q{{\mathbb Q}}
\def\F{{\mathcal F}}
\def\eps{{\epsilon}}
\makeatletter \@addtoreset{equation}{section}
\newtheorem{theorem}{Theorem}[section]
\newtheorem{lemma}[theorem]{Lemma}
\newtheorem{definition}[theorem]{Definition}
\newtheorem{corollary}[theorem]{Corollary}
\newtheorem{proposition}[theorem]{Proposition}
\theoremstyle{definition}
\newtheorem{example}[theorem]{Example}
\newtheorem{remark}[theorem]{Remark}
\title{The Stochastic Solution to a Cauchy Problem for Degenerate Parabolic Equations 
\thanks{This research was supported in part by National Natural Science Foundation of China (No. 11526090 and No. 11601163),  
NSF of Guangdong Province of China(No. 2016A030313448),
the University of Colorado at Boulder (11003573), 
a grant from the Research Growth Initiative of UW-Milwaukee, 
RGC of CityU 109613, and the CityU SRG 7004241.
}
}
\author{
Xiaoshan Chen
\thanks{School of Mathematical Science, South China Normal University, Guangzhou 510631, China, xchen53@gmail.com.}
\and
Yu-Jui Huang
\thanks{Department of Applied Mathematics, University of Colorado, Boulder, CO 80303, USA, yujui.huang@colorado.edu.}
\and
Qingshuo Song
\thanks{Department of Mathematics, City University of Hong Kong, 83 Tat Chee Avenue, Kowloon Tong, Hong Kong, song.qingshuo@cityu.edu.hk.}
\and Chao Zhu
\thanks{Department of Mathematical Sciences, University of Wisconsin-Milwaukee, Milwaukee, WI 53201, USA, zhu@uwm.edu.}
}
\begin{document}
\maketitle

\begin{abstract}
We study the stochastic solution to a Cauchy problem for a degenerate parabolic equation arising from option pricing. When the diffusion coefficient of the underlying price process is locally H\"older continuous with exponent $\delta\in (0, 1]$, the stochastic solution, which represents the price of a European option, is shown to be a classical solution to the Cauchy problem. This improves the standard requirement $\delta\ge 1/2$. Uniqueness results, including a Feynman-Kac formula and a comparison theorem, are established without assuming the usual linear growth condition on the diffusion coefficient. When the stochastic solution is not smooth, it is characterized as the limit of an approximating smooth stochastic solutions. In deriving the main results, we discover a new, probabilistic proof of Kotani's criterion for martingality of a one-dimensional diffusion in natural scale.

\bigskip
\noindent{\bf Key words.}
Local martingales, stochastic solutions, degenerate Cauchy problems, Feynman-Kac formula, necessary and sufficient condition for uniqueness, comparison principle

\bigskip
\noindent{\bf AMS subject classification numbers: 35K65, 60G44}

\end{abstract}

\section{Introduction}
We study the one-dimensional diffusion in natural scale
\begin{equation}\label{SDE X}
 d X^{t,x}_{s} = \sigma(X^{t,x}_{s}) dW_{s},\quad  X^{t,x}_{t} = x\in I=(0,\infty),
\end{equation}
where $W$ is a standard Brownian motion and $\sigma:I \mapsto \R$ is a Borel function satisfying the standing assumption
\begin{equation}\label{standing asm'}
\sigma(\cdot)\neq 0\ \hbox{and}\ \sigma^{-2}(\cdot)\ \hbox{is locally integrable}\ \hbox{on $I$}.
\end{equation}
We also assume that $X$ is absorbed at $0$, once it arrives there.  

Under the paradigm of no-arbitrage pricing, one usually postulates a diffusion model as \eqref{SDE X} for the price process $X$ of the underlying asset, under a risk-neutral measure calibrated to market data. Given a payoff function $g:[0,\infty)\mapsto\R$ and a terminal time $T>0$, the price of the corresponding European option is formulated as 
\begin{equation}\label{defn U}
U(t,x):=\E^{t,x}[g(X^{t,x}_T)],
\end{equation}  
where $\E^{t,x}$ denotes the expectation taken under a law $\P^{t,x}$ such that $X^{t,x}_{t} = x$ a.s. A heuristic use of It\^{o}'s rule shows that if $U$ is smooth, it is a {\it classical solution} to the Cauchy problem
\begin{equation}\label{PDE X g}
\begin{cases}
\partial_t u+\frac{1}{2}\sigma^2(x)\partial_{xx}u=0,\ \ &(t,x)\in[0,T)\times (0,\infty);\\
u(T,x)=g(x),\ \ &x\in(0,\infty);\\
u(t,0)=g(0),\ \ &t\in[0,T].
\end{cases}
\end{equation}
It is, however, difficult to establish the smoothness of $U$ a priori: standard results of parabolic equations (see e.g. \cite[Chapter 6]{Friedman-book-06}) cannot be applied here, as $\sigma$ may (i) grow faster than linearly and (ii) degenerate on the boundary of the state space $I$. By contrast, under fairly general conditions, the strong Markov property of $X$ readily implies that $U$ is the unique {\it stochastic solution} to \eqref{PDE X g}. The notion of stochastic solution was introduced by Stroock \& Varadhan \cite{SV72}, and further developed in the literature of stochastic analysis and mathematical finance, see e.g. \cite{BKX12, BS12, ET09, JT06} and the references therein. 

The goal of this paper is to characterize the stochastic solution $U$, from both probabilistic and analytic perspectives. We focus on the following questions:
\begin{itemize}
\item[(Q1)] What condition gives a Feynman-Kac formula for \eqref{PDE X g} (i.e. if $u$ is a classical solution to \eqref{PDE X g}, then $u$ must coincide with $U$)?
\item [(Q2)] What condition guarantees that $U$ is a classical solution to \eqref{PDE X g})?
\item [(Q3)] If $U$ is not smooth, how do we characterize $U$?
\end{itemize}

As a preparatory step, we study the martingality of $X$. Our contribution is a new proof of Kotani's criterion for $X$ being a martingale (Theorem~\ref{thm:X true mart}). Kotani's original proof in \cite{Kotani06}, together with the detailed exposition in \cite{HP08}, is based on analytic methods. We provide a probabilistic proof, which is more concise than the arguments in \cite{Kotani06, HP08}. Note that Kotani's criterion deals with a general state space $I=(\ell,r)$ for $X$, with $-\infty\le \ell< r\le\infty$. For the special case $I=(0,\infty)$, the criterion reduces to: $X$ is a martingale if and only if 
\begin{equation}\label{DS02}
 \int_{1}^{\infty} \frac{x}{\sigma^{2}(x)} dx = \infty.
\end{equation}
This result was proved under several different approaches in \cite{DS02, MU12, Karatzas-Ruf-13}; see Remark~\ref{rem:literature} for details.  

Our strategy for (Q1) is to leverage on the connection between {\it local} stochastic solutions (Definition \ref{defn:stoch sol V}) and  martingality of $X$. We identify that ``$U$ is the unique local stochastic solution to \eqref{PDE X g}'' if and only if ``$X$ is a martingale'', i.e. ``\eqref{DS02} holds'' (Proposition~\ref{prop:equiv}). Since a classical solution is necessarily a local stochastic solution as mentioned in Remark~\ref{p:class-stoch}, a Feynman-Kac formula for \eqref{PDE X g} holds under \eqref{DS02} (Corollary~\ref{coro:FK}). This result is non-standard, compared to \cite[Chapter 6]{Friedman-book-06} and \cite[Section 5.7]{KS-book-91}, in that under \eqref{DS02} $\sigma$ may grow faster than linearly and there is no continuity assumption on $\sigma$. Moreover, this Feynman-Kac formula generalizes Bayraktar \& Xing \cite[Theorem 1]{BX10}; see Remark~\ref{rem:generalize Thm1}.

Ekstr\"{o}m \& Tysk \cite[Section 3]{ET09} studied (Q2), and showed that local H\"{o}lder continuity of $\sigma$ with exponent $\delta\ge 1/2$ is sufficient for $U$ being a classical solution to \eqref{PDE X g}. Our contribution is proving that $\delta> 0$ is already enough. With this relaxed H\"{o}lder continuity, \eqref{SDE X} no longer admits a unique strong solution, and the strong formulation in \cite{ET09} cannot be used anymore. For the interior smoothness of $U$, we first rely on a fundamental result in Lieberman \cite{Lib96} (which requires only $\delta>0$) to construct a smooth function solving \eqref{PDE X g} away from the boundary; then, we prove by probabilistic methods that this smooth function coincides with $U$ (Lemma~\ref{lem:smoothness}). For the continuity of $U$ up to the boundary, we take a novel approach by using techniques of viscosity solutions developed in Bayraktar and S\^{i}rbu \cite{BS12}. 
Since \cite{BS12} does not require any regularity of $\sigma$, the continuity of $U$ up to the boundary holds under very general conditions (Lemma~\ref{lem:p-boundary condition}).
This, together with the aforementioned non-standard Feynman-Kac formula, characterizes $U$ as the unique classical solution to \eqref{PDE X g}, under local H\"{o}lder continuity of $\sigma$ with exponent $\delta\in(0,1]$ and \eqref{DS02} (Theorem~\ref{thm:exist and unique}).
This in particular generalizes  \cite[Theorem 2]{BX10}.

A comparison theorem for \eqref{PDE X g} is also proved under \eqref{DS02}, without assuming the usual linear growth condition on $\sigma$. Moreover, we establish the equivalence between ``the uniqueness of solutions to \eqref{PDE X g}'' and ``a comparison theorem for \eqref{PDE X g} holds'' (Theorem~\ref{thm:full equiv}). While the latter clearly implies the former, the converse is nontrivial.

Finally, we turn to (Q3), the case where $U$ is {\it not} guaranteed to be smooth (e.g. due to the lack of local H\"{o}lder continuity of $\sigma$). Assuming that $\sigma$ is positive, continuous and satisfies \eqref{DS02}, we construct $\{\sigma_n\}_{n\in\N}$ of locally H\"{o}lder continuous functions such that $\sigma_n\uparrow\sigma$. Each $\sigma_n$ gives rise to a price process $X^{(n)}$, and the corresponding stochastic solution $U_n$ can be characterized as the unique classical solution to \eqref{PDE X g} with $\sigma$ replaced by $\sigma_n$. By expressing $X^{(n)}_T$ and $X_T$ as time-changed Brownian motions, we show that $X^{(n)}_T\to X_T$ in distribution (Lemma~\ref{lem:X^n to X}), which leads to the characterization of $U$ as a limit of the approximating stochastic solutions $U_n$ (Theorem~\ref{thm:U as a limit} and Remark~\ref{rem:U as a limit}).

The paper is organized as follows. Section~\ref{sec:martingality} presents a new proof of Kotani's criterion for $X$ being a martingale.  Section~\ref{sec:Cauchy} studies the existence and uniqueness of local stochastic solutions. A Feynman-Kac formula is obtained as a by-product. Section~\ref{sec:classical} establishes the interior smoothness of $U$ and the continuity of $U$ up to the boundary. This enables us to characterizes $U$ as the unique classical solution to \eqref{PDE X g}, under conditions more general than those in the literature. A comparison theorem for \eqref{PDE X g} is derived, without the linear growth condition on $\sigma$. Section~\ref{sec:U as a limit} deals with the case where $U$ may not be smooth, and approximates $U$ by a sequence of Cauchy problems.


\section{A New Proof of Kotani's Criterion for Martingality of $X$}\label{sec:martingality}

In this section, we assume without loss of generality that $t=0$ in \eqref{SDE X}, and consider a more general state space $I$:  
\begin{equation}\label{SDE X'}
 d X^{x}_{s} = \sigma(X^{x}_{s}) dW_{s},\quad  X^{x}_{0} = x\in I=(\ell,r),
\end{equation}
where $-\infty\le\ell<r\le\infty$. The endpoints of $I=(\ell,r)$ are assumed to be absorbing for $X$; that is, when $X$ reaches an endpoint in finite time, it stays at the endpoint. 

By the arguments in \cite{ES85, ES91}, or \cite[Theorem 5.5.15]{KS-book-91}, the standing assumption \eqref{standing asm'} implies that \eqref{SDE X} admits a weak solution, unique in the sense of probability distribution, up to the ``exit time''
\[
\tau_\infty := \lim_{n\to\infty} \tau_n,\quad \tau_n:= \inf\{t\ge 0: X_t\notin (\ell+1/n,r-1/n)\}.
\]
The process $X$ is by definition a local martingale, and it is of interest to establish a criterion for $X$ being a true martingale. If $\ell$ and $r$ are both finite,  as a bounded local martingale, $X$ is trivially a martingale. When either $\ell$ or $r$ is infinite, the following criterion holds.

\begin{theorem}\label{thm:X true mart}  
Let $(X, W, \Omega,\F,\P, \{\F_{s}\}_{s\ge 0})$  be a weak solution to \eqref{SDE X'}. 
\begin{itemize}
\item [(i)] If $\ell\in \R$ and $r=\infty$, then $X$ is a martingale if and only if 
\begin{equation}\label{DS02 l-finite}
\int_c^\infty \frac{x}{\sigma^2(x)} dx =\infty\quad \hbox{for some $c\in I$.}
\end{equation}
\item [(ii)] If $\ell=-\infty$ and $r\in\R$, then $X$ is a martingale if and only if 
\begin{equation}\label{DS02 r-finite}
\int_{-\infty}^c \frac{-x}{\sigma^2(x)} dx =\infty\quad \hbox{for some $c\in I$.}
\end{equation}
\item [(iii)] If $\ell=-\infty$ and $r=\infty$, then $X$ is a martingale if and only if both \eqref{DS02 l-finite} and \eqref{DS02 r-finite} hold.
\end{itemize}
\end{theorem}

\begin{remark}\label{rem:literature}
For the special case $I=(0,\infty)$, Theorem~\ref{thm:X true mart} (i) reduces to: $X$ is a martingale if and only if \eqref{DS02} holds. This result was first established in Delbaen and Shirakawa \cite[Theorem 1.6]{DS02}, under the stronger assumption that $\sigma$ and $1/\sigma$ are positive and locally bounded. Through studying exponential martingales, Mijatovi\'{c} and Urusov recovered this result in \cite[Corollary 4.3]{MU12}, without the stronger assumption on $\sigma$. Karatzas \& Ruf \cite{Karatzas-Ruf-13} recently derived the same result by analyzing the $h$-transform of $X$; see \cite[Corollary 6.5]{Karatzas-Ruf-13}.

For the general case $I=(\ell,r)$ with $-\infty\le \ell<r\le\infty$, Theorem~\ref{thm:X true mart} was first established in Kotani \cite[Theorem 1]{Kotani06} in the form of a short lecture note. Hulley and Platen \cite[Section 3]{HP08} gives a comprehensive exposition, with detailed proofs, of Kotani's result. Kotani's analytic method utilizes  the fundamental solutions to the associated generalized differential equation. 
We provide a probabilistic proof, which is very different from and more concise than the arguments in \cite{Kotani06,HP08}. 
\end{remark}

\begin{remark}
   In view of Feller's classification of boundary points (see, for example, Chapter 15 of Karlin and Taylor \cite{KarlinT}), since $X$ of  \eqref{SDE X}  is on natural scale, \eqref{DS02 l-finite} implies that the right boundary $\infty$ is a natural boundary. Likewise, \eqref{DS02 r-finite} implies that the left boundary $-\infty$ is a natural boundary. Therefore we can also restate    Theorem \ref{thm:X true mart} as follows: $X$ is a martingale if and only if all its infinite boundary points   are natural boundaries. 
\end{remark}

We first present a useful lemma. 

\begin{lemma}\label{lem:mtgl}
Let $Y:\Omega\times\R_+\mapsto \R$ be a continuous local martingale defined on some filtered probability space $(\Omega,\F,\{\F_t\}_{t\ge 0},\P)$, with $Y_0 = y\in\R$ $\P$-a.s. If $Y$ is bounded from below $\P$-a.s., then $Y$ is a $\P$-martingale if and only if 
\begin{equation}\label{nP[tau_n<T]}
\lim_{\beta\to \infty} \beta \mathbb{P}[\tau^\beta < T]  = 0\quad   \hbox{for all $T\ge 0$},
\end{equation}
where $\tau^\beta:= \inf\{t\ge0: Y_t \ge \beta\}$.
\end{lemma}

\begin{proof}
Since $Y_{\cdot\wedge \tau^\beta}$ is a bounded local martingale and thus a martingale, for all $T\ge 0$, 
\begin{equation}\label{y=E[Y]}
y = \mathbb E[Y_{\tau^\beta\wedge T}] = \mathbb E [Y_{\tau^\beta} 1_{\{\tau^\beta<T\}}] + \mathbb E[Y_{T} 1_{\{\tau^\beta\ge T\}}]= \beta \mathbb{P}[\tau^\beta < T]  + \E[Y_T 1_{\{\tau^\beta\ge T\}}].
\end{equation}
As a local martingale bounded from below, $Y$ is a supermartingale. The supermartingale convergence theorem, see e.g. \cite[Theorem 1.3.15]{KS-book-91}, asserts that $Y_\infty(\omega):=\lim_{t\to\infty}Y_t(\omega)$ exists for a.e. $\omega\in\Omega$ and $\E[|Y_\infty|]<\infty$. This implies that $Y$ does not explode in finite time $\P$-a.s., and thus $\tau^{\beta} \uparrow \infty\ \P\hbox{-a.s.}$ as $\beta \uparrow \infty$. Then, applying the monotone convergence theorem to \eqref{y=E[Y]} yields $\mathbb E[ Y_{T}] = y$ for all $T\ge 0$ if and only if \eqref{nP[tau_n<T]} holds. Finally, since $Y$ is a supermartingale and thus $\E[Y_t\mid\F_s]\le Y_s$ $\P$-a.s. for all $0\le s\le t$, $Y$ is a martingale if and only if $\mathbb E[ Y_{T}] = y$ for all $T\ge 0$.
\end{proof}

The result of this lemma can be traced back to equation (5) in Cox and Hobson \cite{CH05}, where the authors did not prove the result, but cited the lecture note by Az{\'e}ma, Gundy, and Yor \cite{AGY80}. Here, we provide a simpler proof of the same result. Note that results similar to Lemma~\ref{lem:mtgl}, but for much more general processes $Y$, have been established in Elworthy, Li, and Yor \cite{ELY99}; see e.g. Lemma 2.1 and Lemma 3.2 in \cite{ELY99}.

\begin{proof}[Proof of Theorem~\ref{thm:X true mart}]
(i) Without loss of generality, we assume that $\ell=0$, and then \eqref{DS02 l-finite} reduces to \eqref{DS02}. 
Indeed, if $\ell\neq 0$, one may consider the shifted process $\widetilde X:= X- \ell$, which admits the dynamics $d\widetilde X_t = \widetilde\sigma(\widetilde X) dW_t$, with $\widetilde\sigma(z) := \sigma(z+\ell)$ satisfying \eqref{standing asm'} on $(0,\infty)$. Direct calculation shows that $\widetilde\sigma$ satisfying \eqref{DS02} is equivalent to $\sigma$ satisfying \eqref{DS02 l-finite}.


Define the function $\hat \sigma(y): = \frac{\sigma(y)}{1+ y}$. Consider the stochastic differential equation (SDE)
\begin{equation}
\label{SDE Y}
dY_{t} = \sigma(Y_{t})[\hat \sigma (Y_{t})dt+d W_{t} ], \quad Y_{0} = x \in I=(0,\infty), 
\end{equation} 
and assume that $Y$ is absorbed at $0$ once it arrives there. Thanks to \eqref{standing asm'}, we may conclude from Theorem 5.5.15 of \cite{KS-book-91} that the SDE \eqref{SDE Y} admits a weak solution $(Y, W', \Omega', \F', \P', \{\F_{s}'\}_{s\ge 0})$, unique in distribution, up to a possibly finite explosion time $S_\infty$, defined by
\[
S_\infty(\omega):=\lim_{n\to\infty}S_n(\omega),\quad \hbox{with}\quad S_{n}(\omega): = \inf\{t \ge 0: Y_{t}(\omega) \ge n \}\quad \forall n\in\N.
\]
By \eqref{standing asm'} again, conditions (ND)$'$ and (LI)$'$ on p. 343 of \cite{KS-book-91} are satisfied. We can therefore apply to the process $Y$ Feller's test for explosions (see, for example, Theorem 5.5.29 in \cite{KS-book-91}), which implies that $\P'\{S_{\infty} < \infty \} =0$ if and only if  $v(\infty-)=\infty$, with $v(\cdot)$ defined as
\begin{equation}\label{v_c}
v(z):= 2 \int_{1}^{z} \Big (\frac 1 {1+y} - \frac 1 {1+z} \Big) \frac{(1 + y)^{2}}{\sigma^{2}(y)} d y.
\end{equation}
By direct computation, 
\begin{equation}\label{v_c calculation}
 \frac{z}{1+z} \int_{1}^{z/2} \frac{y}{\sigma^{2}(y)} dy \le v_{}(z) \le 4 \int_{1}^{z} \frac{y}{\sigma^{2}(y)} dy.
\end{equation}
We therefore conclude that $\P'\{S_{\infty} < \infty \} =0$ if and only if \eqref{DS02} holds.


Now, consider the process $Z_{t} : = \frac{1+x}{1+Y_{t}}$ for $t \ge 0$. By definition, $Z$ is a bounded process taking values in $ [0, 1+x]$. Moreover, it satisfies the dynamics $d Z_{t} = - Z_{t} \hat \sigma(Y_{t}) d W'_{t}$,  thanks to It\^o's formula. Thus, $Z$ is a bounded $\P'$-local martingale, whence a $\P'$-martingale, with respect to $\{\F_{t}'\}_{t\ge 0}$. For each $n\in\N$, consider the stopping time $\widetilde S_n := S_n\wedge n$, and note that
\begin{equation}\label{lim=lim}
\lim_{n\to\infty}\widetilde S_n(\omega) = \lim_{n\to\infty}S_n (\omega)= S_\infty(\omega),\quad \forall \omega\in \Omega'. 
\end{equation}
For each $n\in\N$, define the probability measure $\Q_{n}$ on $(\Omega',\F'_{\widetilde S_{n}})$ by $d \Q_{n} : = Z_{\widetilde S_{n} } d \P'$. 
Using the martingality of $Z$, we can verify that $\Q_{n}$, $n\in \mathbb N$, are consistent in the sense that $\Q_{n+1} = \Q_{n}$ on $\F'_{\widetilde S_{n}}$. In addition,   for any $t \ge 0$, 
 \begin{displaymath}
\lim_{n\to \infty} \Q_{n}\{ \widetilde S_{n}\le t \} = \lim_{n\to \infty}  \E^{\P'} [ Z_{\widetilde{S}_{n}} 1_{\{ \widetilde{S}_{n} \le t \}}] =  \lim_{n\to \infty}  \E^{\P'} [ Z_{{S}_{n}} 1_{\{ {S}_{n} \le t \}}] =\lim_{n\to\infty}  \frac{1+x}{1+n}\P'(S_{n} \le t) =0,
\end{displaymath} 
where the second equality follows from the observation that for all $n>t$, we have (a) $\widetilde S_n\le t$ if and only if $S_n\le t$, and (b) $S_n\le t$ implies $Z_{\widetilde S_n} =Z_{S_n}$. Thus, by Theorem 1.3.5 of \cite{Stroock-Varadhan-book-06}, there exists a unique probability $\Q$ on $(\Omega', \F') $ such that $\Q =\Q_{n}$ on $\F_{\widetilde S_{n}}'$ for all $n \in \mathbb N $. 

Observe that the process 
$\widetilde W_{t} : = W'_{t}  + \int_{0}^{t} \hat\sigma(Y_{s}) ds$ 
is well-defined up to $S_n$, for all $n\in\N$. Indeed,  
since $Y$ is a weak solution to \eqref{SDE Y} up to $S_\infty$, $\P'\{\int_{0}^{t\wedge S_{n}}\sigma^{2} (Y_{s})ds < \infty\} =1$ for all $t\ge 0$ and $n\in \mathbb N$ (Definition 5.5.20 of \cite{KS-book-91}). It follows that for any fixed $t\ge 0$, it holds $\P'$-a.s. that for all $n\in \mathbb N$,
\begin{displaymath}
\int_{0}^{t\wedge S_{n}}|\hat\sigma  (Y_{s})|ds = \int_{0}^{t\wedge S_{n}} \frac{|\sigma(Y_{s})|}{1+ Y_{s}} ds \le  \int_{0}^{t\wedge S_{n}}  | \sigma(Y_{s})| ds \le t^{1/2} \biggl( \int_{0}^{t\wedge S_{n}}\sigma^{2} (Y_{s})ds \biggr)^{1/2} < \infty.
\end{displaymath} 
Now, by Girsanov's theorem, 
$\widetilde W_{\cdot\wedge \widetilde S_{n}}$ is a $\Q_{n}$- (and hence a $\Q$-) Brownian motion on $[0,\widetilde{S}_{n}]$, for all $n \in \mathbb N$. Thanks to \eqref{lim=lim} and Lemma \ref{l:01}, $\widetilde W_{\cdot\wedge S_{\infty}-} := \lim_{s\uparrow(\cdot\wedge S_\infty)} \widetilde W_s$ is a $\Q$-Brownian motion on $[0, S_{\infty})$. Consequently, for any $\Q$-Brownian motion $\{B_t\}_{t\ge 0}$, the process $\widehat W_{t}: = \widetilde W_{t\wedge S_{\infty}-} + B_{t\vee S_{\infty}} - B_{S_{\infty}}$ is a $\Q$-Brownian motion on $[0,\infty)$. In view of the $\P'$-dynamics \eqref{SDE Y}, $Y$ satisfies under $\Q$ the dynamics   
\begin{displaymath}
 Y_{t\wedge S_{n}} = x + \int_{0}^{t\wedge S_{n}} \sigma(Y_{s}) d \widehat W_{s} \quad t \ge 0,\quad \forall n \in \mathbb N. 
\end{displaymath} 
In other words, $(Y,\widehat W, \Omega', \F', \Q, \{\F_{t}'\}_{t\ge 0})$ is a weak solution to  \eqref{SDE X'} up to $S_\infty$. But a weak solution to  \eqref{SDE X'} does not explode in finite time (Problem 5.5.3 of \cite{KS-book-91}), and thus $S_\infty = \infty$ $\Q$-a.s. 

Consider $T_{n}: = \inf\{ t\ge 0: X_{t} \ge n\}$, for all $n\in \mathbb N$. Since weak solutions to \eqref{SDE X'} are unique in the sense of probability distribution, for each fixed $T\ge 0$ we must have $\P(T_{n} \le  T)=\Q(S_{n}  \le  T)$ for all $n\in\N$. It follows that for each $T\ge 0$,
 \begin{equation}
\label{eq5}
\begin{aligned}
  \lim_{n\to\infty} n \P(T_{n} <  T)& =  \lim_{n\to\infty} n \Q(S_{n}  <  T) =  \lim_{n\to\infty} n \Q_{n}(S_{n}  <  T)\\
  & =   \lim_{n\to\infty} n\E^{\P'}[Z_{S_{n}} 1_{\{S_{n}  <  T\}}]  =  \lim_{n\to\infty} \frac{n(1+ x)}{1+n}\P'(S_{n}  <  T)\\ 
  & = (1+ x)  \lim_{n\to\infty}  \P'(S_{n}  <  T) . 
\end{aligned} \end{equation}
By Lemma \ref{lem:mtgl}, \eqref{eq5} gives the equivalence between ``$X$ is a martingale'' and ``$\lim_{n\to\infty}  \P'(S_{n}  <  T)=0$ for all $T\ge 0$''. On the other hand, since $S_\infty < \infty$ if and only if there exists $T < \infty$ such that $S_n < T$ for all $n\in\N$,
\[
\P'\{S_{\infty} < \infty \} = \P' \Bigg\{ \bigcup_{T =1}^{\infty} \bigcap_{n=1}^{\infty} \{S_{n} < T \} \Bigg\} =\lim_{T\to \infty} \lim_{n\to \infty} \P'\{S_{n} < T\}.
\]
As the map $T \mapsto \lim_{n\to\infty}  \P'(S_{n}  \le  T) $ is nondecreasing, the above equation implies ``$\lim_{n\to\infty}  \P'(S_{n}  <  T)=0$ for all $T\ge 0$'' if and only if ``$\P'\{S_{\infty} < \infty \}=0$''. Thus, we obtain the equivalence between ``$X$ is a martingale'' and ``$\P'\{S_{\infty} < \infty \}=0$''. The proof is complete once we recall that the latter condition is equivalent to \eqref{DS02}. 

(ii) The process $Y_\cdot := - X_\cdot$ is a local martingale evolving in $\widetilde I:=(-r,\infty)$, satisfying the dynamics $dY_t = \widetilde\sigma(Y_t)dW_t$, where $\widetilde\sigma(z) := -\sigma(-z)$. Applying part (i) to $Y$, we have $Y$ is a martingale if and only if $\int_{c}^\infty \frac{z}{\widetilde{\sigma}^2(z)}dz = \infty$ for some $c\in\widetilde I$. By the definition of $\widetilde\sigma$, this is equivalent to \eqref{DS02 r-finite}.

(iii) Consider $I=(-\infty,\infty)$. We will first show that $\E[|X_T|]< \infty$ for all $T\ge 0$, by using an argument in \cite[Theorem 3.10 (a)]{HP08}. Let $\{L^a_t:a\in I, t\ge 0\}$ be the family of local times of $X$, where $L^a_t = \lim_{\eps\downarrow 0 }\frac{1}{2\eps}\int_0^t 1_{\{X_s \in (a-\eps,a+\eps)\}} \sigma^2(X_s) ds$ $\P$-a.s. Note that \cite[Corollary 1.8, p.226]{RY99} particularly implies that we can choose a version of $L_{t}^{a}$,  such that $a\mapsto L_{t}^{a}$ is continuous for each $t\ge 0$ $\P$-a.s. For any $T\ge 0$, Takana's formula (see e.g. \cite[p.222]{RY99})  and Fatou's Lemma lead to $\E[|X_T|] \le |x| + \E[L^0_T]$.
Fix $\delta>0$. By the occupation times formula (see e.g. \cite[Corollary 1.6, p.224]{RY99}) and the mean value theorem, for $\P$-a.e. $\omega\in\Omega$, there exists $y^*\in (-\delta,\delta)$ such that
\begin{equation}\label{occupation time}
\int_0^T 1_{ (-\delta,\delta)}(X_{t}(\omega)) dt = \int_I 1_{(-\delta,\delta)}(y) L^y_T(\omega) dy = \int_{-\delta}^{\delta} L^y_T(\omega) dy = 2\delta L^{y^*}_T(\omega).
\end{equation}
For any $\alpha\in (0,1/2)$, \cite[Corollary 1.8, p.226]{RY99} asserts that there exists a 
$K = K(T)> 0$ such that 
$\sup\{|L^a_t-L^0_t|:t\in[0,T]\} \le K(T) |a|^\alpha$ $\P$-a.s. for all $a\in\R$. This, together with \eqref{occupation time}, gives
\[
\E[L^0_T] \le \E[K(t)\delta^\alpha + L^{y^*}_T] = K(T)\delta^\alpha + \frac{1}{2\delta}\int_0^T \P[X_t \in (-\delta,\delta)] dt\le K(T)\delta^\alpha + \frac{T}{2\delta},
\]
which implies $\E[|X_T|] \le |x|+ K(T)\delta^\alpha + \frac{T}{2\delta}<\infty$. 

Now, if $X$ is a martingale on $(-\infty,\infty)$, then \eqref{DS02 l-finite} and \eqref{DS02 r-finite} hold as a consequence of parts (i) and (ii). To prove the converse, suppose \eqref{DS02 l-finite} and \eqref{DS02 r-finite} hold, and let $S^+_n:=\inf\{s\ge 0: X_s\ge n\}$ and $S^-_{n}:=\inf\{s\ge 0: X_s\le -n\}$. By parts (i) and (ii), $X_{\cdot\wedge S^-_{n}}$ and $X_{\cdot\wedge S^+_n}$ are martingales, for all $n\in\N$. Thus, for any $T\ge 0$,  
\begin{align*}
x &= \E[X_{S^+_n\wedge T}] = \E [X_{S^+_n} 1_{\{S^+_n<T\}}] + \E[X_{T} 1_{\{S^+_n\ge T\}}]= n \P\left[S^+_n < T\right]   + \E [X_T 1_{\{S^+_n\ge T\}}],\\
x &= \E[X_{S^-_n\wedge T}] = \E [X_{S^-_n} 1_{\{S^-_n<T\}}] + \E[X_{T} 1_{\{S^-_n\ge T\}}]= -n \P\left[S^-_n < T\right]   + \E [X_T 1_{\{S^-_n\ge T\}}].
\end{align*}
Since $X$ does not explode in finite time $\P$-a.s. (see e.g. \cite[Problem 5.5.3]{KS-book-91}), $S^+_n\uparrow\infty$ and $S^-_n\uparrow\infty$ $\P$-a.s. as $n\uparrow\infty$. Thanks to $\E[|X_T|]<\infty$, applying the dominated convergence theorem to the above equations yield  
\begin{equation*}
x = \lim_{n\to\infty} n \P\left[S^+_n < T\right] + \E[X_T]\quad \hbox{and} \quad x = -\lim_{n\to\infty} n \P\left[S^-_n < T\right] + \E[X_T],
\end{equation*}
which implies $x \ge \E[X_T]$ and $x\le \E[X_T]$. Thus, $x=\E[X_T]$ for all $T\ge 0$, and we conclude that $X$ is a martingale.
\end{proof}


\section{Local Stochastic Solutions} \label{sec:Cauchy}

\subsection{Preliminaries}\label{sec:local stoch sol}

In this section, we introduce the notion of local stochastic solutions to a linear parabolic equation, and present several existence and uniqueness results.

Let $G$ be an open subset of $\R^d$. For any $(t,v)\in (0,\infty)\times G$, we consider an $\R^d$-valued diffusion 
\begin{equation}\label{SDE V}
dV_s=b(V_s)ds+a(V_s)dW_s,\ V_t=v\in G,
\end{equation}
where $b:\R^d\mapsto\R^d$ and $a:\R^d\mapsto \mathbb{M}^d$ are Borel functions (here $\mathbb{M}^d$ denotes the set of $d\times d$ matrices), and $W$ is a standard $d$-dimensional Brownian motion. We only assume that $b$ and $a$ are such that \eqref{SDE V} has a weak solution. Define the differential operator
\[
L^{a,b}:=b'\nabla+\frac{1}{2}\text{Tr}(aa'\nabla^2).
\]
Fix a time horizon $T>0$. Given a Borel function $g:\bar{G}\mapsto\R$, we consider the following boundary value problem
\begin{equation}\label{PDE}
\begin{cases}
\partial_t u(t,v)+L^{a,b}u(t,v)=0,\ \ &(t,v)\in(0,T)\times G;\\
u(t,v)=g(v),\ \ &(t,v)\in [0,T]\times\partial{G}\cup\{T\}\times G.
\end{cases}
\end{equation}

\begin{definition}
We say $u:[0,T]\times\bar{G}\mapsto\R$ is a classical solution to \eqref{PDE} if $u\in C^{1,2}([0,T)\times G) \cap C([0,T]\times \bar G)$ and satisfies \eqref{PDE}.
\end{definition}

Under current general set-up, \eqref{PDE} may not admit a classical solution. Motivated by \cite[Section 5]{SV72} and the specific formulation in \cite[Definitions 2.1 and 2.2]{BS12}, we introduce a weaker notion of solutions to \eqref{PDE} as follows.

\begin{definition}\label{defn:stoch sol V}
A Borel function $u:[0,T]\times \bar{G}\mapsto\R$ is a stochastic solution (resp. local stochastic solution) to \eqref{PDE} if
\begin{itemize}
\item [(i)] for any $(t,v)\in[0,T]\times G$ and any weak solution $(V^{t,v}, W^{t,v},\Omega^{t,v},\F^{t,v},\P^{t,v},\{\F^{t,v}_s\}_{s\ge t})$ of \eqref{SDE V},  $u\left(r\wedge\tau^{t,v},V^{t,v}_{r\wedge\tau^{t,v}}\right)$ is a $\P^{t,v}$-martingale (resp. $\P^{t,v}$-local martingale), where
\[
\tau^{t,v}:=\inf\{s\ge t: V^{t,v}_s\notin G\}\wedge T.
\]
\item [(ii)] $u(t,v)=g(v)$ for $(t,v)\in[0,T]\times\partial{G}\cup\{T\}\times G$.
\end{itemize}
\end{definition}

\begin{remark}\label{p:class-stoch}
In \cite{SV72} and \cite{BS12}, the authors introduce stochastic solutions, but not their {\it local} versions. Since they only take bounded functions as solutions, every local stochastic solution is a stochastic solution under their consideration. Here, without the boundedness restriction, the additional notion of local stochastic solutions is necessary. 
In particular, by applying It\^{o}'s rule to the process $u(r,X^{t,x}_r)$, 
 a classical solution $u$ to \eqref{PDE} is a local stochastic solution to \eqref{PDE}. However, a classical solution $u$ to \eqref{PDE}  is not necessarily a stochastic solution. Example~\ref{eg:x loc sol} below serves as an counterexample.

\end{remark}

\begin{remark}
Our formulation in Definition~\ref{defn:stoch sol V} is more general than those in the literature (see e.g. \cite[Definition 3.1]{Hsu85}, \cite[Defnition 2.2]{JT06}, and \cite[Definition 5.1]{BKX12}), as it requires neither regularity nor boundedness of solutions and allows for local martingale property.  
\end{remark}

Uniqueness of stochastic solutions is inherent in Definition~\ref{defn:stoch sol V}.

\begin{proposition}\label{prop:unique stoch sol}
If \eqref{PDE} admits a stochastic solution $u$, then $u(t,v)=\E^{t,v}[g(V^{t,v}_{\tau^{t,v}})]$ for any weak solution $(V^{t,v}, W^{t,v},\Omega^{t,v}, \F^{t,v},\P^{t,v},\{\F^{t,v}_s\}_{s\ge t})$ to \eqref{SDE V}.
\end{proposition}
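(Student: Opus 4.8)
The plan is to use the martingale property in Definition~\ref{defn:stoch sol V}(i) together with the boundary identification in (ii), evaluated at the stopping time $\tau^{t,v}$. Fix $(t,v)\in[0,T]\times G$ and a weak solution $(V^{t,v},W^{t,v},\Omega^{t,v},\F^{t,v},\P^{t,v},\{\F^{t,v}_s\}_{s\ge t})$ of \eqref{SDE V}. By the definition of a \emph{stochastic} solution (as opposed to a merely local one), the process $M_r:=u(r\wedge\tau^{t,v},V^{t,v}_{r\wedge\tau^{t,v}})$ is a genuine $\P^{t,v}$-martingale on $[t,T]$. Hence $\E^{t,v}[M_T]=M_t$, i.e.
\[
\E^{t,v}\!\left[u\!\left(T\wedge\tau^{t,v},V^{t,v}_{T\wedge\tau^{t,v}}\right)\right]=u(t,v).
\]
Since $\tau^{t,v}\le T$ by construction, we have $T\wedge\tau^{t,v}=\tau^{t,v}$, so the left side is $\E^{t,v}[u(\tau^{t,v},V^{t,v}_{\tau^{t,v}})]$.

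Next I would argue that $u(\tau^{t,v},V^{t,v}_{\tau^{t,v}})=g(V^{t,v}_{\tau^{t,v}})$ almost surely, which then gives the claim. This is where one must be slightly careful. On the event $\{\tau^{t,v}=T\}$, the point $(\tau^{t,v},V^{t,v}_{\tau^{t,v}})=(T,V^{t,v}_T)$ lies in $\{T\}\times \bar G$; if $V^{t,v}_T\in G$ then $u(T,V^{t,v}_T)=g(V^{t,v}_T)$ by (ii), and if $V^{t,v}_T\in\partial G$ then again $u(T,V^{t,v}_T)=g(V^{t,v}_T)$ by (ii) (the boundary condition is imposed on all of $[0,T]\times\partial G\cup\{T\}\times G$, and $\partial G$ at time $T$ is covered by the first piece). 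On the event $\{\tau^{t,v}<T\}$, by right-continuity of paths and the definition $\tau^{t,v}=\inf\{s\ge t: V^{t,v}_s\notin G\}$, the point $V^{t,v}_{\tau^{t,v}}$ lies in $\bar G\setminus G=\partial G$ (here one uses that $V$ is non-exploding, as noted in Remark~\ref{rem:non-exploding} for the relevant case, so that the value at $\tau^{t,v}$ is a well-defined point of $\bar G$), and so $u(\tau^{t,v},V^{t,v}_{\tau^{t,v}})=g(V^{t,v}_{\tau^{t,v}})$ by (ii). In all cases the integrand equals $g(V^{t,v}_{\tau^{t,v}})$, and substituting yields $u(t,v)=\E^{t,v}[g(V^{t,v}_{\tau^{t,v}})]$, as desired.

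The main obstacle is not the martingale manipulation, which is immediate once one uses that a stochastic solution (not merely local) gives a true martingale, but rather the measure-theoretic bookkeeping at the stopping time: verifying that $V^{t,v}_{\tau^{t,v}}$ genuinely lands on $\partial G$ (closure of the state space, path regularity, and non-explosion all enter here) and that the boundary clause (ii) covers every case $(\tau^{t,v}<T$ versus $=T$, and interior versus boundary at time $T)$. Once that is pinned down the proof is a one-line computation. Note this also immediately yields uniqueness: any two stochastic solutions must agree with the common expression $\E^{t,v}[g(V^{t,v}_{\tau^{t,v}})]$ on $[0,T]\times G$, and they agree on the parabolic boundary by (ii), hence they coincide on $[0,T]\times\bar G$.
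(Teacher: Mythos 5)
Your proposal is correct and takes the same approach as the paper: use the martingale property of $u(r\wedge\tau^{t,v},V^{t,v}_{r\wedge\tau^{t,v}})$ to write $u(t,v)=\E^{t,v}[u(\tau^{t,v},V^{t,v}_{\tau^{t,v}})]$, then replace the integrand by $g(V^{t,v}_{\tau^{t,v}})$ via the boundary/terminal identification in Definition~\ref{defn:stoch sol V}(ii). The paper states this as a one-line chain of equalities; your extra bookkeeping on the events $\{\tau^{t,v}<T\}$ versus $\{\tau^{t,v}=T\}$ and on whether $V^{t,v}_{\tau^{t,v}}$ lies in $G$ or $\partial G$ is just an expansion of the same step and is compatible with the paper's argument.
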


\begin{proof}
If $u$ is a stochastic solution to \eqref{PDE}, then
\[
u(t,v)=\E^{t,v}[u(T\wedge\tau^{t,v},V^{t,v}_{T\wedge\tau^{t,v}})]=\E^{t,v}[u(\tau^{t,v},V^{t,v}_{\tau^{t,v}})]=\E^{t,v}[g(V^{t,v}_{\tau^{t,v}})].
\]
\end{proof}

In spite of Proposition~\ref{prop:unique stoch sol}, the function $u(t,v):=\E^{t,v}[g(V^{t,v}_{\tau^{t,v}})]$ a priori may not be a stochastic solution to \eqref{PDE}. Indeed, since we currently assume only existence, but not uniqueness in distribution, of weak solutions to \eqref{SDE V}, the function $u$ may not be well-defined. For instance, when $G$ is a subset of $\R$ and $b\equiv 0$, ``\eqref{SDE V} admits a weak solution while uniqueness in distribution fails'' can be completely characterized by the condition
\[
\left\{x\in\R:\int_{-\eps}^\eps\frac{dy}{a^2(x+y)}=\infty,\ \forall\eps>0\right\}\subsetneqq \{x\in\R:a(x)=0\}.
\]
We refer the readers to Section 5.5 A and B in \cite{KS-book-91} for details. In particular, Remark 5.5.6 explains how uniqueness fails and provides a few concrete examples.


In the following, we provide a sufficient condition for $u(t,v):=\E^{t,v}[g(V^{t,v}_{\tau^{t,v}})]$ being a stochastic solution to \eqref{PDE}.

\begin{proposition}\label{prop:E[] is sol}
Assume uniqueness in distribution of weak solutions to \eqref{SDE V}, for any initial condition $(t,v)\in[0,\infty)\times \bar{G}$. If $g(V^{t,v}_{\tau^{t,v}})$ is $\P^{t,v}$-integrable  for all  $(t,v)\in[0,T]\times \bar{G}$, then the function $u(t,v):=\E^{t,v}[g(V^{t,v}_{\tau^{t,v}})]$
is the unique stochastic solution to \eqref{PDE}.
\end{proposition}

\begin{proof}
Thanks to the uniqueness in distribution of weak solutions to \eqref{SDE V}, the function $u(t,v):=\E^{t,v}[g(V^{t,v}_{\tau^{t,v}})]$ is well-defined, and the unique weak solution $V^{t,v}$ to \eqref{SDE V} has the strong Markov property (see e.g. \cite[Corollary 4.23]{ES91}). Fix $t\ge 0$. For simplicity, we write $\ell^{t,v}$ for $(\ell\wedge\tau^{t,v},V^{t,v}_{\ell\wedge\tau^{t,v}})$ for any $\ell\ge t$. For any $t\le s\le r$, we have
\begin{align*}
\E^{t,v}\left[u\left(r\wedge\tau^{t,v},V^{t,v}_{r\wedge\tau^{t,v}}\right)\ \middle|\ \F_s\right]
&=\E^{t,v}\left[\E^{r^{t,v}}\left[g \left(V^{r^{t,v}}_{\tau^{r^{t,v}}}\right)\right]\ \middle|\ \F_s\right]=\E^{t,v}\left[\E^{{t,v}}\left[g\left(V^{{t,v}}_{\tau^{{t,v}}}\right)\ \middle|\ \F_r\right]\ \middle|\ \F_s\right]\\
&= \E^{{t,v}}\left[g\left(V^{{t,v}}_{\tau^{{t,v}}}\right)\ \middle|\ \F_s\right]=
\E^{s^{t,v}}\left[g\left(V^{s^{t,v}}_{\tau^{s^{t,v}}}\right)\right]=u\left(s\wedge\tau^{t,v},V^{t,v}_{s\wedge\tau^{t,v}}\right),
\end{align*}
where the Markov property of $V^{t,v}$ is used in the second and the fourth equalities. This shows that $u\left(r\wedge\tau^{t,v},V^{t,v}_{r\wedge\tau^{t,v}}\right)$ is a martingale. Since $u(t,v)=g(v)$ for $(t,v)\in[0,T]\times\partial{G}\cup\{T\}\times G$ by definition, we conclude that $u$ is a stochastic solution to \eqref{PDE}. The uniqueness follows from Proposition~\ref{prop:unique stoch sol}.
\end{proof}

\begin{remark}\label{rem:relations between sols}
In view of \cite[Theorem 2.7]{JT06}, one may expect the relation``a {\it continuous} local stochastic solution to \eqref{PDE} must be a classical solution''. However, since \cite[Theorem 2.7]{JT06} relies on the arguments in \cite[Theorem 16.1]{Lieberman92}, which demands H\"{o}lder continuity of the coefficients $a$ and $b$ in \eqref{SDE V}, it is unclear if the expected relation holds under current general set-up. It is of interest to either establish the relation using different arguments or find a counterexample.  

In Section~\ref{sec:classical}, we will impose H\"{o}lder continuity on the coefficient $\sigma$ of \eqref{SDE X}. We can then prove the above relation by the same arguments in \cite[Theorem 2.7]{JT06}. This relation, nonetheless, is not useful in Section~\ref{sec:classical}, as proving the continuity of the stochastic solution is difficult; see Section~\ref{sec:interior}.
\end{remark}


\subsection{Local Stochastic Solutions to the Cauchy Problem \eqref{PDE X g}}

In this section, we apply the set-up in Section~\ref{sec:local stoch sol} to the Cauchy problem \eqref{PDE X g}. Our main result is a characterization for the existence of a unique local stochastic solution to \eqref{PDE X g} (Theorem~\ref{thm:unique local stoch sol}). This in particular gives a non-standard Feynman-Kac formula (Corollary~\ref{coro:FK}).

Consider the process $X$ in \eqref{SDE X} with $I=(0,\infty)$. The corresponding boundary value problem \eqref{PDE} becomes the Cauchy problem \eqref{PDE X g}. From the fact that $X$ is non-exploding (see e.g. \cite[Problem 5.5.3]{KS-book-91}) and is absorbed at the origin once it arrives there, we observe that
\begin{equation*}
u\left(r\wedge\tau^{t,x},X^{t,x}_{r\wedge\tau^{t,x}}\right)=u(r\wedge T,X^{t,x}_{r\wedge T})\ \ \hbox{for all}\ r\ge t,
\end{equation*}
where $\tau^{t,x}=\inf\{s\ge t: X^{t,x}_s\notin (0,\infty)\}\wedge T$. Definition~\ref{defn:stoch sol V} then reduces to:


\begin{definition}\label{defn:stoch sol X}
A Borel function $u:[0,T]\times [0,\infty) \mapsto\R$ is a stochastic solution (resp. local stochastic solution) to \eqref{PDE X g} if
\begin{itemize}
\item [(i)] for any $(t,x)\in[0,T]\times [0,\infty)$ and any weak solution $(X^{t,x}, W^{t,x},\Omega^{t,x},\F^{t,x},\P^{t,x},\{\F^{t,x}_s\}_{s\ge t})$ of \eqref{SDE X},  $u\left(r\wedge T,X^{t,x}_{r\wedge T}\right)$ is a $\P^{t,x}$-martingale (resp. $\P^{t,x}$-local martingale).
\item [(ii)] $u(T,x)=g(x)$ for $x\in (0, \infty)$, $u(t,0)=g(0)$ for $t\in[0,T]$.
\end{itemize}
\end{definition}

The next example shows that a classical solution may not be a stochastic solution.

\begin{example}\label{eg:x loc sol}
With $g(x):=x$, the Cauchy problem \eqref{PDE X g} is
\begin{equation}\label{PDE X}
\begin{cases}
\partial_t u+\frac{1}{2}\sigma^2(x)\partial_{xx}u=0,\ \ &(t,x)\in[0,T)\times (0,\infty);\\
u(T,x)=x,\ \ &x\in(0,\infty);\\
u(t,0)=0,\ \ &t\in[0,T].
\end{cases}
\end{equation}
While $u(t,x):=x$ is a classical solution to \eqref{PDE X}, it is in general only a local stochastic solution. It is a stochastic solution if and only if $X$ is a true martingale.
\end{example}

Let $\hat D$ denote the set of functions which have linear growth in $x$, i.e.
$$\hat D :=\{u:[0,T]\times [0,\infty)\mapsto\R:\exists K>0\ \hbox{s.t.}\ |u(t,x)|\le K(1+x)\ \hbox{for all}\ (t,x)\in[0,T]\times [0,\infty)\}.$$
For any $v:[0,\infty)\mapsto\R$, we will slightly abuse the notation by saying that $v\in\hat{D}$ if there exists $K>0$ such that $|v(x)|\le K(1+x)$ for all $x\in[0,\infty)$.

\begin{proposition}\label{prop:E[T] is sol}
If $g:[0,\infty)\mapsto\R$ belongs to $\hat D$, then $U$ defined in \eqref{defn U} is the unique stochastic solution to \eqref{PDE X g}. Moreover, $U$ belongs to $\hat D$.
\end{proposition}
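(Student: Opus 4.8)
The plan is to apply Proposition~\ref{prop:E[] is sol} with $G=(0,\infty)$, $a=\sigma$, $b\equiv 0$, so that the only things to verify are: (a) uniqueness in distribution of weak solutions, which is exactly \eqref{H1}; and (b) the integrability requirement, namely that $U\left(r\wedge T,X^{t,x}_{r\wedge T}\right)$ is integrable for all $r\ge t$ and all $(t,x)\in[0,T]\times[0,\infty)$. Recall that via \eqref{tau^t,x -> T}, the stopped-at-the-boundary process coincides with the process stopped at $T$, so this is the right integrability statement. Before doing (b), however, I would first need to know that $U$ itself is well-defined and finite, i.e. that $\E^{t,x}[|g(X^{t,x}_T)|]<\infty$; this follows from the linear growth of $g$ together with the supermartingale property of the nonnegative local martingale $X^{t,x}$, which gives $\E^{t,x}[X^{t,x}_T]\le x<\infty$, hence $\E^{t,x}[|g(X^{t,x}_T)|]\le K(1+\E^{t,x}[X^{t,x}_T])\le K(1+x)<\infty$. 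This same estimate already shows $U\in\hat D$ with the same constant $K$, which disposes of the ``moreover'' clause.

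For the integrability in (b), I would use the strong Markov property of $X$ (available under \eqref{H1}; see \cite[Corollary 4.23]{ES91} as cited in the proof of Proposition~\ref{prop:E[] is sol}) to write, for $t\le r\le T$,
\[
U\left(r,X^{t,x}_r\right)=\E^{r,X^{t,x}_r}\!\left[g\!\left(X^{r,X^{t,x}_r}_T\right)\right]=\E^{t,x}\!\left[g(X^{t,x}_T)\ \middle|\ \F_r\right],
\]
so that $U(r\wedge T, X^{t,x}_{r\wedge T})$ is a conditional expectation of the integrable random variable $g(X^{t,x}_T)$ and is therefore integrable (for $r\ge T$ it is just $g(X^{t,x}_T)$, or $g(0)$ if the path has been absorbed, still integrable). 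Once both hypotheses of Proposition~\ref{prop:E[] is sol} are checked, that proposition delivers immediately that $U$ is the unique stochastic solution to \eqref{PDE X g} (note \eqref{PDE X g} is the specialization of \eqref{PDE} to this setting, and $U$ agrees with the $u$ of that proposition by definition \eqref{defn U}).

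I expect the only genuine subtlety — hardly an ``obstacle'' — to be the bookkeeping around the boundary: making sure that the absorption of $X$ at $0$, the boundary value $g(0)$, and the identity \eqref{tau^t,x -> T} are invoked so that Definition~\ref{defn:stoch sol X} (the reduced form) rather than the general Definition~\ref{defn:stoch sol V} is what one is verifying, and that condition (ii) of the stochastic-solution definition is met, which is immediate from the definition of $U$ at $x=0$ and at $t=T$. Everything else is a direct citation of Proposition~\ref{prop:E[] is sol} and the supermartingale property of nonnegative local martingales.
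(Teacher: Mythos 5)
Your proposal is correct and follows essentially the same route as the paper: both reduce to Proposition~\ref{prop:E[] is sol} after verifying uniqueness in distribution (which is \eqref{H1}) and the integrability hypothesis, with $U\in\hat D$ established from the linear growth of $g$ and the supermartingale property of the nonnegative local martingale $X$. The only cosmetic difference is in the integrability check, where you route through the strong Markov property to identify $U(r\wedge T, X^{t,x}_{r\wedge T})$ with $\E^{t,x}[g(X^{t,x}_T)\mid\F_r]$; the paper instead reuses the already-established bound $|U(s,y)|\le K(1+y)$ together with the stopped supermartingale estimate $\E^{t,x}[X^{t,x}_{r\wedge\tau^{t,x}}]\le x$, which is a touch more direct and avoids re-deriving the Markov step that Proposition~\ref{prop:E[] is sol} already performs internally.
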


\begin{proof}
First, recall that the standing assumption \eqref{standing asm'} implies the existence of a weak solution to \eqref{SDE X} unique in distribution (as explained in the beginning of Section~\ref{sec:martingality}). Moreover, take $K>0$ such that $|g(x)|\le K(1+x)$ for all $x\in[0,\infty)$. Then, for any $[0,T]\times [0,\infty)$, $|\E^{t,x}[g(X^{t,x}_{\tau^{t,x}})]|\le \E^{t,x}[|g(X^{t,x}_{\tau^{t,x}})|]\le \E^{t,x}[K(1+X^{t,x}_{\tau^{t,x}})]\le K(1+x)<\infty$.  
We can now conclude from Proposition~\ref{prop:E[] is sol} that the function $u(t,x):=\E^{t,x}[g(X^{t,x}_{\tau^{t,x}})]$ is the unique stochastic solution to \eqref{PDE X g}, and $u\in\hat D$. Finally, we observe that $u$ coincides with $U$ defined in \eqref{defn U}:
\[
u(t,x)=\E^{t,x}[g(X^{t,x}_{\tau^{t,x}})]=\E^{t,x}[g(0)1_{\{\tau^{t,x}<T\}}+g(X^{t,x}_T)1_{\{\tau^{t,x}=T\}}]=\E^{t,x}[g(X^{t,x}_T)]=U(t,x),
\]
where the third equality follows from $X^{t,x}$ being absorbed at the origin once it arrives there.
\end{proof}

The next result connects the uniqueness of local stochastic solutions to the martingality of $X$. Parts of the proof are motivated by \cite{BX10}.

\begin{proposition}\label{prop:equiv}
The following are equivalent:
\begin{itemize}
\item [(i)] $X$ is a martingale.
\item [(ii)] For any $T>0$, \eqref{PDE X} admits a unique local stochastic solution in $\hat D$.
\item [(iii)] For any $T>0$ and $g\in\hat D$, \eqref{PDE X g} admits a unique local stochastic solution in $\hat D$.
\end{itemize}
\end{proposition}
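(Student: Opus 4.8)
The plan is to prove the three-way equivalence by a cyclic argument $(i)\Rightarrow(iii)\Rightarrow(ii)\Rightarrow(i)$, using the previously established machinery. For $(i)\Rightarrow(iii)$: assume $X$ is a true martingale and fix $g\in\hat D$ and $T>0$. By Proposition~\ref{prop:E[T] is sol}, $U(t,x)=\E^{t,x}[g(X^{t,x}_T)]$ is the unique \emph{stochastic} solution to \eqref{PDE X g} and $U\in\hat D$; since every stochastic solution is a local stochastic solution, existence in $\hat D$ is immediate. For uniqueness among local stochastic solutions in $\hat D$, suppose $\tilde u\in\hat D$ is another local stochastic solution. The key point is that the martingality of $X$ upgrades ``local martingale'' to ``martingale'' for the process $\tilde u(r\wedge T, X^{t,x}_{r\wedge T})$: because $\tilde u\in\hat D$, one has $|\tilde u(r\wedge T,X^{t,x}_{r\wedge T})|\le K(1+X^{t,x}_{r\wedge T})$, and the right-hand side is a (uniformly integrable on $[t,T]$) martingale since $X$ is a martingale, hence dominates the local martingale and forces it to be a genuine martingale. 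Evaluating the martingale property between $t$ and $T$ gives $\tilde u(t,x)=\E^{t,x}[\tilde u(T,X^{t,x}_T)]=\E^{t,x}[g(X^{t,x}_T)]=U(t,x)$, using the boundary condition (ii) together with the absorption of $X$ at $0$ encoded in \eqref{tau^t,x -> T}.

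The implication $(iii)\Rightarrow(ii)$ is trivial: \eqref{PDE X} is just \eqref{PDE X g} with the specific payoff $g(x)=x$, which plainly lies in $\hat D$, so (iii) applied to this $g$ yields (ii).

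For the heart of the argument, $(ii)\Rightarrow(i)$, I would argue by contradiction using Example~\ref{eg:x loc sol}. Suppose (ii) holds but $X$ is \emph{not} a true martingale, i.e. it is a strict local martingale. Consider the Cauchy problem \eqref{PDE X} with $g(x)=x$. The function $u_1(t,x):=x$ is a classical solution, hence by Proposition~\ref{prop:classical is local stoch} a local stochastic solution, and clearly $u_1\in\hat D$. On the other hand, $u_2(t,x):=\E^{t,x}[X^{t,x}_T]$ is also a local stochastic solution in $\hat D$: it lies in $\hat D$ because $X$ is a nonnegative supermartingale so $\E^{t,x}[X^{t,x}_T]\le x$, and the local-martingale property of $u_2(r\wedge T,X^{t,x}_{r\wedge T})$ follows from the strong Markov property exactly as in the proof of Proposition~\ref{prop:E[] is sol} (only the \emph{integrability}, not the full martingale property, is needed to run that computation, and nonnegativity plus the supermartingale bound supply it). But since $X$ is a strict local martingale, there is some $(t,x)$ with $\E^{t,x}[X^{t,x}_T]<x$, so $u_1\not\equiv u_2$. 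This contradicts uniqueness in (ii), completing the cycle.

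The main obstacle is the careful bookkeeping in $(ii)\Rightarrow(i)$: one must verify that $u_2(t,x)=\E^{t,x}[X^{t,x}_T]$ genuinely qualifies as a \emph{local} stochastic solution in the sense of Definition~\ref{defn:stoch sol X}, which requires checking integrability of $u_2(r\wedge T,X^{t,x}_{r\wedge T})$ for all $r$ and the local-martingale (Markov/tower) property along every weak solution — here uniqueness in distribution from \eqref{H1} is what makes $u_2$ well-defined and Markovian, and the supermartingale bound $\E^{t,x}[X^{t,x}_{r\wedge T}]\le x$ is what keeps everything integrable. A secondary subtlety, already handled by \eqref{tau^t,x -> T}, is that stopping at $\tau^{t,x}$ versus at $T$ makes no difference because $X$ is absorbed at $0$ and the candidate solutions are constant on $\{x=0\}$; I would invoke that identity rather than re-derive it. Everything else is a direct application of the quoted propositions.
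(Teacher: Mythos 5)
Your proof is correct, and the key implication — that martingality of $X$ forces uniqueness of local stochastic solutions — is handled by a genuinely different and cleaner argument than the paper's. You prove $(i)\Rightarrow(iii)$ directly via a domination argument: for any local stochastic solution $\tilde u\in\hat D$, the local martingale $M_r:=\tilde u(r\wedge T,X^{t,x}_{r\wedge T})$ is bounded in absolute value by the closable nonnegative martingale $N_r:=K(1+X^{t,x}_{r\wedge T})$, and this forces $M$ to be a true martingale, whence $\tilde u(t,x)=\E^{t,x}[g(X^{t,x}_T)]$ by evaluating at the deterministic times $t$ and $T$. The paper, by contrast, worries explicitly that $M$ may fail to be right-continuous (since $\tilde u$ is only measurable), so that optional sampling cannot be applied to $M$ at the hitting time $\tau^\beta$; it therefore constructs right-continuous modifications $\tilde Z,\tilde Z^n$ via \cite[Theorem 1.3.13]{KS-book-91}, stops them at $\tau^\beta$, and sends $\beta\to\infty$ using the estimate $\beta\,\P[\tau^\beta<T]\to0$ from Lemma~\ref{lem:mtgl}. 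Your route sidesteps all of this because you never apply optional sampling to $M$: passing to the limit in the localizing sequence $\{M_{r\wedge\pi^n}\}_n$ only requires uniform integrability of $\{N_{r\wedge\pi^n\wedge T}\}_n$, and since $N$ is \emph{continuous} (being a function of $X$), optional sampling for $N$ gives $N_{r\wedge\pi^n\wedge T}=\E[N_T\,|\,\F_{r\wedge\pi^n\wedge T}]$, hence UI. It would be worth making this last point explicit — that the reason your domination argument survives the lack of right-continuity of $\tilde u$ is that the optional sampling is applied only to the continuous dominator, not to $M$ itself — as that is precisely the obstacle the paper's more elaborate construction was built to overcome. Your $(ii)\Rightarrow(i)$ matches the paper's step 1, and $(iii)\Rightarrow(ii)$ is trivial as you say; the paper instead proves $(i)\Leftrightarrow(ii)$ and then upgrades to general $g$ via a subtraction argument $(\ell:=u_1-u_2)$, which your direct $(i)\Rightarrow(iii)$ renders unnecessary.
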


\begin{proof}
$(i)\Leftarrow(ii)$: Suppose $X$ is a strict local martingale. Then there must exist a $T^*> 0$ such that $U(t,x) := \mathbb E^{t,x}[X^{t,x}_{T^*}]<x$. In view of Proposition~\ref{prop:E[T] is sol} and Example~\ref{eg:x loc sol}, $U(t,x)$ and $x$ are two distinct local stochastic solutions to \eqref{PDE X} in $\hat D$ with the time horizon $T:=T^*$.

$(i)\Rightarrow(ii)$: Suppose $X$ is a martingale. Fix an arbitrary $T>0$, and let $u$ be a local stochastic solution to \eqref{PDE X} in $\hat D$. Since $Z_r:=u(r\wedge T, X^{t,x}_{r\wedge T})$ is a local martingale and there exists some $K>0$ such that $|u(t,x)|\le K(1+x)$ for all $(t,x)\in[0,T]\times[0,\infty)$, $Z_{r\wedge\tau^\beta}$ is a martingale for each $\beta>0$, where $\tau^{\beta} := \inf\{s\ge t: X^{t,x}_s \ge \beta\}$. It follows that
\begin{equation} \label{eq:sqs1}
 u(t,x) =  \E^{t,x}[ Z_{T\wedge \tau^{\beta}}] = \E^{t,x}[u({\tau^\beta},\beta) 1_{\{\tau^\beta<T\}}]+\E^{t,x}[X^{t,x}_T 1_{\{\tau^\beta \ge T\}}].
\end{equation}
Note that
\[
\lim_{\beta\to \infty} \left|\E^{t,x}[u({\tau^\beta},\beta) 1_{\{\tau^\beta<T\}}]\right| \le \lim_{\beta\to \infty} K(1+\beta)\P^{t,x}[\tau^\beta<T] =0,
\]
where the equality follows from Lemma~\ref{lem:mtgl}. Also, since $X^{t,x}$ is non-explosive (as a martingale bounded from below), $\tau^{\beta} \uparrow \infty$ $\P^{t,x}$ a.s. as $\beta \uparrow \infty$. We thus conclude from \eqref{eq:sqs1} and the monotone convergence theorem that
\[
u(t,x) = \lim_{\beta\to\infty} \E^{t,x}[X^{t,x}_T 1_{\{\tau^\beta \ge T\}}] = \E^{t,x}[X^{t,x}_T] = x,
\]
which implies the uniqueness of local stochastic solutions to \eqref{PDE X} in $\hat D$.

$(ii)\Leftarrow(iii)$: This is  trivial.

$(ii)\Rightarrow(iii)$: By Proposition~\ref{prop:E[T] is sol}, $u_1(t,x):=\E^{t,x}[g(X^{t,x}_T)]$ is a stochastic solution (and thus a local stochastic solution) to \eqref{PDE X g} in $\hat D$. By contradiction, assume that there exists a local stochastic solution $u_2$ to \eqref{PDE X g} in $\hat D$ such that $u_2\not\equiv u_1$. Define $\ell:=u_1-u_2\not\equiv 0$. It can be checked that $\ell\in\hat D$,
$
\ell(\cdot\wedge T, X^{t,x}_{\cdot\wedge T})=u_1(\cdot\wedge T,X^{t,x}_{\cdot\wedge T})-u_2(\cdot\wedge T, X^{t,x}_{\cdot\wedge T})
$
is a local martingale, $\ell(T,x)=u_1(T,x)-u_2(T,x)=0$ for $x\in[0,\infty)$, and $\ell(t,0)=u_1(t,0)-u_2(t,0)=0$ for all $t\in[0,T]$. Thus, by Proposition~\ref{prop:E[T] is sol} again, $\E^{t,x}[X^{t,x}_T]$ and $\E^{t,x}[X^{t,x}_T]+\ell(t,x)$ are distinct local stochastic solutions to \eqref{PDE X} in $\hat D$, which contradicts $(ii)$.
\end{proof}

\begin{remark}
In view of Propositions~\ref{prop:E[T] is sol} and \ref{prop:equiv}, $X$ is a martingale if and only if $U$ defined in \eqref{defn U} is the unique local stochastic solution to \eqref{PDE X g} in $\hat D$. A similar relation has been established in \cite[Proposition 5.4]{BKX12} in a stochastic volatility model, under additional regularity and growth condition on the coefficients of the associated state dynamics.
\end{remark}

We obtain the main result of this section, as a consequence of Theorem~\ref{thm:X true mart}, Propositions~\ref{prop:E[T] is sol} and \ref{prop:equiv}.

\begin{theorem}\label{thm:unique local stoch sol}
$U$ defined in \eqref{defn U} is the unique local stochastic solution to the Cauchy problem \eqref{PDE X g} in $\hat D$ for any $T>0$ and $g\in\hat D$ if and only if \eqref{DS02} holds.
\end{theorem}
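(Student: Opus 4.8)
The plan is to obtain this statement by assembling the three results already established, with essentially no new probabilistic or analytic work required. The backbone is a double equivalence: by Theorem~\ref{thm:X true mart}, condition \eqref{DS02} is equivalent, under \eqref{H1}, to $X$ being a true martingale; and by the equivalence $(i)\Leftrightarrow(iii)$ in Proposition~\ref{prop:equiv}, $X$ being a martingale is equivalent to the statement that for every $T>0$ and every $g\in\hat D$ the Cauchy problem \eqref{PDE X g} admits a unique local stochastic solution in $\hat D$. Hence \eqref{DS02} holds if and only if \eqref{PDE X g} has a unique local stochastic solution in $\hat D$ for all $T>0$ and all $g\in\hat D$. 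It then only remains to check that, whenever this unique solution exists, it must be the value function $U$ from \eqref{defn U}.

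For the \emph{if} direction, assume \eqref{DS02}, and fix $T>0$ and $g\in\hat D$. The discussion above yields a unique local stochastic solution to \eqref{PDE X g} in $\hat D$. By Proposition~\ref{prop:E[T] is sol}, $U$ is a stochastic solution to \eqref{PDE X g} and $U\in\hat D$; since a stochastic solution is in particular a local stochastic solution (a true martingale is a local martingale), $U$ is a local stochastic solution in $\hat D$, and therefore by uniqueness it is \emph{the} unique one.

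For the \emph{only if} direction, suppose that for every $T>0$ and every $g\in\hat D$ the function $U$ is the unique local stochastic solution to \eqref{PDE X g} in $\hat D$. In particular \eqref{PDE X g} has a unique local stochastic solution in $\hat D$ for all such $T$ and $g$, which is exactly statement $(iii)$ of Proposition~\ref{prop:equiv}; hence $X$ is a true martingale, and Theorem~\ref{thm:X true mart} then gives \eqref{DS02}.

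I do not expect a genuine obstacle here: the theorem is a repackaging of Theorem~\ref{thm:X true mart} together with Propositions~\ref{prop:equiv} and~\ref{prop:E[T] is sol}. The one point deserving a moment's attention is that Proposition~\ref{prop:equiv}$(iii)$ only asserts \emph{uniqueness} of the local stochastic solution in $\hat D$, not that it coincides with $U$; this gap is closed precisely by Proposition~\ref{prop:E[T] is sol}, which explicitly exhibits $U$ as an element of $\hat D$ that is a (local) stochastic solution. It is also worth bearing in mind that every step rests on the standing assumption \eqref{H1}, so the conclusion is stated under \eqref{H1} as well.
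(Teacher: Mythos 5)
Your proposal is correct and matches the paper's own (implicit) argument: the theorem is stated as following directly from Theorem~\ref{thm:X true mart}, Proposition~\ref{prop:equiv}, and Proposition~\ref{prop:E[T] is sol}, and your assembly of those three results — including the observation that Proposition~\ref{prop:E[T] is sol} is needed to identify the unique local stochastic solution with $U$ — is exactly the intended proof.
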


Theorem~\ref{thm:unique local stoch sol} and Remark~\ref{p:class-stoch} yield a non-standard Feynman-Kac formula.

\begin{corollary}[Feynman-Kac formula]\label{coro:FK}
Suppose \eqref{DS02} holds. For any $T>0$ and $g\in\hat D$, if $u\in\hat D$ is a classical solution to \eqref{PDE X g}, then $u$ admits the stochastic representation $u(t,x)=\E^{t,x}[g(X^{t,x}_T)]$.
\end{corollary}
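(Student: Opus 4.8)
The plan is to deduce the statement directly from the machinery already assembled in this section, so the argument is essentially a concatenation of Proposition~\ref{prop:classical is local stoch} and Theorem~\ref{thm:unique local stoch sol}. First I would observe that, by Proposition~\ref{prop:classical is local stoch}, any classical solution $u$ to \eqref{PDE X g} is automatically a local stochastic solution to \eqref{PDE X g} in the sense of Definition~\ref{defn:stoch sol X}; concretely, one applies It\^o's rule to $u(r,X^{t,x}_r)$ on the interior $(0,T)\times(0,\infty)$, using a localizing sequence of stopping times that keep $X^{t,x}$ bounded away from $0$ and $\infty$ and below $T$, so that only the $C^{1,2}$ regularity in the interior is used; the drift term vanishes because $u$ solves the PDE, leaving a local martingale. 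The observation \eqref{tau^t,x -> T} (valid since a classical solution satisfies $u(\cdot,0)\equiv g(0)$) then lets us pass from $u(r\wedge\tau^{t,x},X^{t,x}_{r\wedge\tau^{t,x}})$ to $u(r\wedge T,X^{t,x}_{r\wedge T})$, matching Definition~\ref{defn:stoch sol X}(i), while the boundary data in \eqref{PDE X g} give Definition~\ref{defn:stoch sol X}(ii).

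Next, since $u\in\hat D$ by hypothesis, $u$ is in fact a local stochastic solution to \eqref{PDE X g} \emph{lying in $\hat D$}. By Theorem~\ref{thm:unique local stoch sol}, under \eqref{H1} together with \eqref{DS02}, the function $U$ defined in \eqref{defn U}, i.e. $U(t,x)=\E^{t,x}[g(X^{t,x}_T)]$, is the \emph{unique} local stochastic solution to \eqref{PDE X g} in $\hat D$ for the given $T$ and $g\in\hat D$. Therefore $u\equiv U$ on $[0,T]\times[0,\infty)$, which is exactly the claimed stochastic representation $u(t,x)=\E^{t,x}[g(X^{t,x}_T)]$.

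There is no genuinely new obstacle here: the substance of the corollary is entirely inherited from Theorem~\ref{thm:unique local stoch sol}, which in turn rests on Proposition~\ref{prop:equiv} (reducing uniqueness to martingality of $X$) and Theorem~\ref{thm:X true mart} (equating martingality with \eqref{DS02}). The only points that warrant a line of care in writing up are (a) making the localization in the It\^o step explicit, since $u$ is only assumed $C^{1,2}$ in the open region and merely continuous up to the boundary, and (b) confirming that the growth bound $u\in\hat D$ is what places $u$ in the uniqueness class of Theorem~\ref{thm:unique local stoch sol}; both are routine. If one wanted to avoid invoking \eqref{tau^t,x -> T}, an alternative is to argue directly via Definition~\ref{defn:stoch sol V} on $G=(0,\infty)$ and note that on $\{\tau^{t,x}<T\}$ the process is absorbed at $0$ where $u=g(0)$, so the stopped process is still a local martingale; this leads to the same conclusion.

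\hfill$\Box$
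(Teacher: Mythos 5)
Your proposal is correct and follows exactly the paper's route: the corollary is stated there as an immediate consequence of Proposition~\ref{prop:classical is local stoch} (a classical solution is a local stochastic solution) combined with Theorem~\ref{thm:unique local stoch sol} (under \eqref{H1} and \eqref{DS02}, $U$ is the unique local stochastic solution in $\hat D$). Your additional remarks on the localization in the It\^{o} step and on the role of \eqref{tau^t,x -> T} are accurate elaborations of details the paper leaves implicit.
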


\begin{remark}
Note that the standing assumption \eqref{standing asm'} does not impose any continuity on $\sigma$, and $\eqref{DS02}$ is weaker than the linear growth condition on $\sigma$. This is in contrast to the usual conditions assumed on $\sigma$, continuity and linear growth, for a Feynman-Kac formula to hold (see e.g. \cite[Section 6.5]{Friedman-book-06} and \cite[Section 5.7.B]{KS-book-91}).
\end{remark}

\begin{remark}\label{rem:generalize Thm1}
Corollary~\ref{coro:FK} extends \cite[Thoerem 1]{BX10} to the case where $\sigma$ and $1/\sigma$ need not be locally bounded. 
Note that local boundedness of $\sigma$ and $1/\sigma$ is required in \cite[Thoerem 1]{BX10}, while it is not explicitly stated there. A convex function $\Psi$ is introduced in the proof of \cite[Theorem 1]{BX10}, and the authors, by referring to an argument in \cite[Theorem 1.6]{DS02}, states that  
\begin{equation}\label{ubb}
\E^{t,x}[\Psi(X^{t,x}_{\tau_n})]\quad \hbox{is uniformly bounded in $n$},
\end{equation}
where $\tau_n:=\inf\{s\ge t:X^{t,x}_s\notin(1/n,n)\}\wedge T$. Then, the de la Vall\'{e}e Poussin criterion can be applied to establish the uniform integrability of $\{X^{t,x}_{\tau_n}\}_{n\in\N}$, a crucial step in their proof. The argument in \cite[Theorem 1.6]{DS02}, however, requires local boundedness in $\sigma$ and $1/\sigma$. In particular, on p. 163 of \cite{DS02}, \eqref{ubb} is established under the condition $\E[\int_0^{\tau_n}\Psi'(X_{t})dX_{t}] =0$. In view of (1.1) and (1.2) in \cite{DS02}, this is equivalent to
\[
\E\left[\int_0^{\tau_n} \left(\int_1^{X_t} \frac{u}{\sigma^2(u)} du\right) \sigma(X_t)  dW_t \right]=0.
\] 
This condition holds when $\sigma$ and $1/\sigma$ are locally bounded, a standing assumption in \cite{DS02}, but may not be true for general $\sigma$. Since \cite[Theorem 1]{BX10} refers to the above argument in \cite{DS02}, it needs local boundedness of $\sigma$ and $1/\sigma$. 
\end{remark}

\begin{remark}
We focus on the state space $I=(0,\infty)$ because it is most relevant in terms of financial applications and corresponds to the pricing equation \eqref{PDE X g}. By the same arguments, one can actually extend all the developments to $I=(\ell,\infty)$ for some $\ell\in\R$, with $\hat D$ modified as the set of functions $u:[0,T]\times (\ell,\infty)\mapsto \R$ satisfying the linear growth condition, and \eqref{DS02} in Theorem~\ref{thm:unique local stoch sol} and Corollary~\ref{coro:FK} replaced by \eqref{DS02 l-finite}. 
\end{remark}


\section{Connection to Classical Solutions}\label{sec:classical}
In this section, we first derive a sufficient condition for the stochastic solution $U(t,x):=\E^{t,x}[g(X^{t,x}_T)]$ to be a classical solution to \eqref{PDE X g}. This problem has been studied in \cite{ET09}; here, we provide a weaker condition using a different approach. This, together with the uniqueness result in Section~\ref{sec:local stoch sol}, characterizes $U$ as the unique classical solution to \eqref{PDE X g}, under fairly general conditions. A comparison theorem for \eqref{PDE X g} is also established, without imposing linear growth condition on $\sigma$, as required in the standard literature. 

\subsection{Interior Smoothness of $U$} \label{sec:interior}
In \cite[Theorem 3.2]{ET09}, the authors showed that local H\"{o}lder continuity of $\sigma$ on $(0,\infty)$ with exponent $\delta\ge 1/2$ guarantees that $U$ is a classical solution to \eqref{PDE X g}. We will prove that the same result holds even with $\delta\in(0,1/2)$. 

This relaxation of H\"{o}lder continuity gives rise to technical difficulties. In \cite{ET09}, with $\delta\ge1/2$, \eqref{SDE X} admits a unique strong solution. Taking advantage of some continuity results of a strong solution with respect to initial data $(t,x)$, the authors proved that $U(t,x)=\E^{t,x}[g(X^{t,x}_T)]$ is continuous on compact domains. Since a continuous local stochastic solution is a classical solution (see Remark~\ref{rem:relations between sols}), this immediately yields interior smoothness of $U$.
By contrast, with $\delta\in (0,1/2)$, we have to work with weak solutions to \eqref{SDE X}, and the argument in \cite[Theorem 3.2]{ET09} does not work anymore. 
This makes it difficult to derive the continuity of $U(t,x)=\E^{t,x}[g(X^{t,x}_T)]$ a priori. 

In the following, we adopt a new strategy: we first construct a classical solution to \eqref{PDE X g} on each bounded domain by resorting to \cite{Lib96}, and show that those classical solutions converge to a smooth function $\hat u$ on the entire domain by an Arzela-Ascoli-type argument. Then, we show that $\hat u$ coincides with $U$ by probabilistic methods. 

\begin{lemma}\label{lem:smoothness}
Suppose $\sigma$ is locally H\"older continuous on $(0,\infty)$ with exponent $\delta\in(0,1]$. Then, for any $T>0$ and any continuous $g:[0,\infty)\mapsto \R$ in $\hat D$ that is bounded from below, the stochastic solution $U$, defined in \eqref{defn U}, belongs to $C^{1,2}([0,T)\times(0,\infty))\cap\hat D$ and satisfies \eqref{PDE X g}.
\end{lemma}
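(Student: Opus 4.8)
\emph{The plan} is to carry out the program announced in the introduction. First I would localize to bounded sub-cylinders, where the equation becomes uniformly parabolic with H\"{o}lder coefficients, use the Schauder theory of Lieberman~\cite{Lib96} (which needs only $\delta>0$) to produce classical solutions there, pass to the limit by an Arzel\`{a}--Ascoli argument, and finally identify the limit with $U$ by probabilistic arguments. Two facts set the stage. Since $\sigma$ is continuous and, by \eqref{standing asm}, strictly positive on $(0,\infty)$, the function $\sigma^{-2}$ is locally bounded and hence locally integrable on $(0,\infty)$, so by Remark~\ref{rem:non-exploding} condition \eqref{H1} holds; thus $X^{t,x}$ is non-exploding and is absorbed at $0$, and by Proposition~\ref{prop:E[T] is sol} the function $U(t,x)=\E^{t,x}[g(X^{t,x}_T)]$ is well defined, lies in $\hat D$, and satisfies $0\le U(t,x)\le K(1+x)$ for the constant $K$ with $g(x)\le K(1+x)$. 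Moreover $U(T,x)=g(x)$ for $x>0$ and $U(t,0)=g(0)$ for $t\in[0,T]$ (the latter because $X^{t,0}\equiv0$), so it remains only to show $U\in C^{1,2}([0,T)\times(0,\infty))$ and that $U$ satisfies the equation in $[0,T)\times(0,\infty)$.

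A single direct limit will not reproduce $U$: solving the Dirichlet problem on $Q_n:=(0,T)\times(1/n,n)$ with terminal data $g$ and lateral data the constants $g(1/n),g(n)$ produces $\E^{t,x}[g(X^{t,x}_{T\wedge\tau_n})]$, $\tau_n:=\inf\{s\ge t:X^{t,x}_s\notin(1/n,n)\}$, whose $n\to\infty$ limit equals $U$ only when $X$ is a true martingale (cf. Example~\ref{eg:x loc sol} with $g(x)=x$). I would therefore approximate in two stages. Fix $M>0$, put $g_M:=g\wedge M$ (bounded and continuous), and for each $n$ let $u^M_n\in C^{1,2}(Q_n)\cap C(\overline{Q}_n)$ be the classical solution of $\partial_t u+\tfrac12\sigma^2\partial_{xx}u=0$ in $Q_n$ with parabolic boundary data $g_M$; its existence and uniqueness follow from \cite{Lib96}. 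Since the stopped process $u^M_n(s\wedge(T\wedge\tau_n),X^{t,x}_{s\wedge(T\wedge\tau_n)})$ is a bounded, hence true, martingale (because $u^M_n$ is continuous on the compact set $\overline{Q}_n$), It\^{o}'s formula gives the bounded-domain Feynman--Kac representation $u^M_n(t,x)=\E^{t,x}[g_M(X^{t,x}_{T\wedge\tau_n})]$.

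Now I would pass to the limit twice. Letting $n\to\infty$ with $M$ fixed: since $X^{t,x}$ cannot explode, $X^{t,x}_{\tau_n}$ cannot equal $n$ for large $n$, so $\tau_n\uparrow\tau_0:=\inf\{s\ge t:X^{t,x}_s=0\}$ and, by continuity of paths and absorption at $0$, $X^{t,x}_{T\wedge\tau_n}\to X^{t,x}_T$ a.s.; as $g_M$ is bounded and continuous, dominated convergence gives $u^M_n(t,x)\to U_M(t,x):=\E^{t,x}[g_M(X^{t,x}_T)]$ pointwise. On each fixed $Q_m$ the interior Schauder estimate of \cite{Lib96} gives $|u^M_n|^*_{2+\delta;Q_m}\le C_m\sup_{Q_m}|u^M_n|\le C_m M$ for all $n\ge m$, with $C_m$ depending only on $m$, so, since $\{0\}\times(1/m,m)$ is at positive parabolic distance from the parabolic boundary of $Q_m$, $\{u^M_n\}_n$ is precompact in $C^{1,2}_{\mathrm{loc}}([0,T)\times(0,\infty))$; Arzel\`{a}--Ascoli and uniqueness of the pointwise limit then force $u^M_n\to U_M$ in $C^{1,2}_{\mathrm{loc}}$, whence $U_M\in C^{1,2}([0,T)\times(0,\infty))$ and $\partial_tU_M+\tfrac12\sigma^2\partial_{xx}U_M=0$ there. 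Letting $M\to\infty$: as $g\ge0$ we have $g_M\uparrow g$, so monotone convergence gives $U_M\uparrow U$ pointwise, while $0\le U_M\le U\le K(1+x)$ gives $|U_M|^*_{2+\delta;Q_m}\le C_m K(1+m)$ uniformly in $M$; the same Arzel\`{a}--Ascoli argument yields $U_M\to U$ in $C^{1,2}_{\mathrm{loc}}$. Hence $U\in C^{1,2}([0,T)\times(0,\infty))\cap\hat D$ and $U$ solves \eqref{PDE X g}.

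\emph{The main obstacle} is making sure the limit really is $U$: one must design the approximation so that the limit of the bounded-domain solutions equals $U$ itself, and not some other local stochastic solution of \eqref{PDE X g} (such as the trivial one of Example~\ref{eg:x loc sol} when $X$ is a strict local martingale, to which the naive construction converges). The payoff truncation is exactly the device that resolves this --- for bounded $g_M$ the bounded-domain limit is governed by plain dominated convergence and genuinely equals $\E^{t,x}[g_M(X^{t,x}_T)]$, after which $M\to\infty$ is handled by monotone convergence. The accompanying technical point is to obtain the interior Schauder bounds uniformly in \emph{both} approximation parameters, and this is where the Lieberman estimates, valid for every $\delta\in(0,1]$ rather than only $\delta\ge1/2$, are indispensable.
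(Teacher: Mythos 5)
Your proof is correct, and it rests on the same analytic backbone as the paper's argument: Lieberman's Schauder theory on the truncated cylinders $Q_n$ (valid for every $\delta\in(0,1]$), interior weighted estimates controlled by sup-norms, an Arzel\`a--Ascoli/diagonal argument, and a probabilistic identification of the limit. Where you genuinely diverge is in the approximation scheme and hence in the identification step. The paper uses a single sequence of boundary data $g_n$ with $g_n(x)=(1-\tfrac1n)g(x)$ on $[0,n-\tfrac1n]$ and $g_n\equiv 0$ for $x\ge n$: forcing $g_n$ to vanish at the upper lateral boundary is what kills the contribution of paths exiting $Q_n$ at the top (which would otherwise leave a strict-local-martingale defect, as in Example~\ref{eg:x loc sol}), and the identification $\hat u=U$ then goes through a case analysis on whether $X^{t,x}$ is absorbed at $0$ (the terms \eqref{tau^0>0} and \eqref{tau^0=0}), followed by monotone convergence. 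You instead decouple the two difficulties with a two-parameter scheme: truncating the payoff at level $M$ first makes the bounded-domain representation $\E^{t,x}[g_M(X^{t,x}_{T\wedge\tau_n})]$ converge to $\E^{t,x}[g_M(X^{t,x}_T)]$ by plain dominated convergence (using non-explosion and absorption at $0$ to get $X^{t,x}_{T\wedge\tau_n}\to X^{t,x}_T$ a.s.), so no case analysis is needed; the price is a second pass through the Schauder/Arzel\`a--Ascoli machinery to carry smoothness through the limit $M\to\infty$, which your uniform bound $\sup_{Q_m}|U_M|\le K(1+m)$ (independent of $M$) correctly supplies. Both routes hinge on the same two facts --- the interior Schauder constants depend only on the fixed subdomain and not on the approximation index, and the upper-boundary contribution must be suppressed by hand rather than by a martingale property --- so your variant is a legitimate alternative, somewhat cleaner in the identification step and somewhat longer in the compactness step.
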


\begin{proof}
We will use the H\"{o}lder norm $|\cdot|_{2+\delta}$ and its weighted version $|\cdot|^*_{2+\delta}$ throughout the proof. We denote by $H_{k+\delta}$ (resp. $H^*_{k+\delta}$) the collection of functions with finite H\"{o}lder norms (resp. finite weighted H\"{o}lder norms). For readers' convenience, their definitions are given in Appendix~\ref{sec:Holder norms}.

We first assume that $g$ is bounded from below by $0$, and will deal with the general case in the last paragraph of the proof. Let $\{g_n\}$ be a sequence of nonnegative continuous functions such that $g_n\uparrow g$, with $g_n(x)=(1-\frac{1}{n})g(x)$ for $x\in[0,n-1/n]$ and $g_n(x)=0$ for $x\ge n$ (here, $g_n$ can be arbitrary on $(n-1/n,n)$, as long as $g_n$ is continuous and $g_n\uparrow g$). 
For any $n\in\N$, define $Q_{n} := [0,T) \times (1/n, n)$ and denote by $\partial^{*} Q_{n}$ the parabolic boundary of $Q_n$. Consider the following equation
\begin{equation}\label{PDEn}\tag{PDE$_n$}
\begin{cases}
\partial_{t} u + \frac 1 2 \sigma^{2} \partial_{xx} u = 0\ \ &\hbox{ in } Q_{n},\\
u(t,x) = g_{n}(x)\ \  &\hbox{ on } \partial^{*} Q_{n}.
\end{cases}
\end{equation}
Since $\sigma$ is H\"{o}lder continuous with exponent $\delta>0$ and $g_n$ is continuous, we deduce from \cite[Theorems 5.9 and 5.10]{Lib96} that there exists a classical solution $u_{n}\in H_{2 + \delta}^{*} (Q_{n}) \cap C(\bar Q_{n})$ to \eqref{PDEn}.

\begin{enumerate}
\item [1.] {\it There exists a $\hat u\in C^{1,2}([0,T)\times(0,\infty))$ such that $u_n\to\hat u$ on $[0,T)\times (0,\infty)$}. Take an increasing sequence $\{E_k\}$ of compact subsets of $[0,T)\times(0,\infty)$ such that $\bigcup_{k\in\N} E_k =[0,T)\times(0,\infty)$. Fix $k\in\N$. There exists some $N\in\N$ such that $E_k\subset Q_N$. Since $\sigma$ is locally H\"older continuous with exponent $\delta>0$, it in particular satisfies (4.20a) in \cite{Lib96} on the set $Q_N$. We may then apply \cite[Theorem 4.9]{Lib96}, which states that if $u\in H_{2 + \delta}^{*} (Q_{N})$ is a solution of 
\begin{equation}\label{eqn in Q_N}
\partial_{t} u + \frac 1 2 \sigma^{2} \partial_{xx} u = 0\ \ \hbox{ in } Q_{N},
\end{equation}
then there is a constant $K_1$, determined only by the function $\sigma$ and the set $Q_N$, such that
\[
|u|^{*}_{2+\delta, Q_N} \le K_1 |u|_{0, Q_N}.
\]
By construction, $u_n\in H_{2 + \delta}^{*} (Q_{N})$ satisfies \eqref{eqn in Q_N}, for all $n>N$. Thus,  
\begin{equation}
 \label{eq:sh1}
 |u_{n}|^{*}_{2+\delta, Q_N} \le K_1 |u_{n}|_{0, Q_N},\ \forall\ n> N.
\end{equation}
Then, by the definition of the weighted norm $|\cdot|^{*}_{2+\delta}$, there exists $K_2>0$ (independent of $n> N$) such that
\begin{equation}
\label{eq:sh2}
  |u_{n}|_{2+\delta, E_k} \le K_2 |u_{n}|^{*}_{2+\delta, Q_N}, \ \forall\ n>N.
\end{equation}
On the other hand, for each $n\in\N$, since $u_{n}$ is a smooth solution to \eqref{PDEn}, by It\^{o}'s rule we have the probabilistic representation
\begin{equation}\label{prob represent}
u_n(t,x)=\E^{t,x}[g_n(X^{t,x}_{\tau^n})]\ \ \ \hbox{for}\ (t,x)\in Q_n,\ \ \ \hbox{where}\ \tau^{n}:= \inf\{s\ge t: (s,X^{x,t}_{s})\notin Q_{n}\}.
\end{equation}
Considering that $0\le g_n\le g$, the linear growth condition of $g$ implies that there exists $K_3>0$ (independent of $n\in\N$) such that
\begin{equation*}
 \label{eq:sh3}
 |u_{n}(t,x)| \le \mathbb E^{t,x} [|g_{n}(X^{t,x}_{\tau^{n}})|]   \le \mathbb E^{t,x} [K_3(1+X^{t,x}_{\tau^{n}})]  \le K_3(1+x)\ \ \hbox{for}\ (t,x)\in Q_n,
\end{equation*}
which in particular implies that $|u_n|_{0,Q_N}\le K_3(1+N)$ for all $n > N$. This, together with \eqref{eq:sh1} and \eqref{eq:sh2}, shows that
\[
|u_{n}|_{2+\delta, E_k} \le K_4,
\]
for some $K_4>0$ (independent of $n > N$). Now, applying the Arzela-Ascoli theorem, we conclude that (up to a subsequence) $u_n$ converges uniformly on $E_k$ to some function $\hat{u}_k\in C^{1,2}(E_k)$. Since the above argument holds for each compact set $E_k$, we may choose a subsequence of $u_n$ (using the diagonal method) such that $u_n\to \hat u$ on $[0,T)\times (0,\infty)$ with $\hat u =\hat{u}_k$ on $E_k$ for all $k\in\N$. This in particular implies that $\hat u\in C^{1,2}([0,T)\times (0,\infty))$.

\item [2.] {\it $\hat u$ coincides with the stochastic solution $U$ on $[0,T)\times (0,\infty)$}. For any $(t,x)\in [0,T)\times(0,\infty)$, by \eqref{prob represent} and the fact that $g_n(n)=0$, we have
\begin{equation}\label{hat u=U}
\begin{split}
\hat u(t,x)&=\lim_{n\to\infty} u_n(t,x) = \lim_{n\to \infty}\E^{t,x}[g_n(X^{t,x}_{\tau^n})]\\
&=\lim_{n\to\infty}\E^{t,x}\left[g_n(X^{t,x}_T)1_{\{X^{t,x}_{\tau^n}\in(1/n,n)\}}+g_n(1/n)1_{\{X^{t,x}_{\tau^n}=1/n\}}\right]\\
&=\lim_{n\to\infty}\E^{t,x}\left[g_n(X^{t,x}_T)1_{\{X^{t,x}_{\tau^n}\in[1/n,n)\}}+[g_n(1/n)-g_n(X^{t,x}_T)]1_{\{X^{t,x}_{\tau^n}=1/n\}}\right].
\end{split}
\end{equation}
Let $\tau^0:=\inf\{s\ge t:X^{t,x}_s=0\}\wedge T$. On the set $\{X^{t,x}_{\tau^0}>0\}$, $X^{t,x}_{\tau^n}>1/n$ for $n$ large enough $\P^{t,x}$-a.s. Since $|g_n(1/n)-g_n(X^{t,x}_T)|\le \sup_{y \in [0,1]} g(y)+g(X^{t,x}_T)$ for all $n\in\N$ and $g(X^{t,x}_T)$ is integrable as $g\in\hat D$, by using the definition of $g_n$ and the dominated convergence theorem,
\begin{equation}\label{tau^0>0}
\lim_{n\to\infty}\E^{t,x}\left[[g_n(1/n)-g_n(X^{t,x}_T)]1_{\{X^{t,x}_{\tau^n}=1/n\}}1_{\{X^{t,x}_{\tau^0}>0\}}\right]=0.
\end{equation}
On the other hand, on the set $\{X^{t,x}_{\tau^0}=0\}$, 
\begin{equation}\label{tau^0=0}
\begin{split}
\lim_{n\to\infty}&\E^{t,x}\left[[g_n(1/n)-g_n(X^{t,x}_T)]1_{\{X^{t,x}_{\tau^n}=1/n\}}1_{\{X^{t,x}_{\tau^0}=0\}}\right]\\
&=\lim_{n\to\infty}\E^{t,x}\left[[(1-1/n)g(1/n)-g_n(0)]1_{\{X^{t,x}_{\tau^n}=1/n\}}1_{\{X^{t,x}_{\tau^0}=0\}}\right] = 0,
\end{split}
\end{equation}
where the first equality follows from the definition of $g_n$ and the fact that $X^{t,x}$ is absorbed at zero, and the second equality is due to the dominated convergence theorem and $\lim_{n\to\infty}(1-1/n)g(1/n)-g_n(0)=0$, thanks again to the definition of $g_n$. Now, combining \eqref{hat u=U}, \eqref{tau^0>0}, and \eqref{tau^0=0}, we obtain
\[
\hat u(t,x)=\lim_{n\to\infty}\E^{t,x}\left[g_n(X^{t,x}_T)1_{\{X^{t,x}_{\tau^n}\in[1/n,n)\}}\right].
\]
By the monotone convergence theorem, $\hat u(t,x)= \E^{t,x}[g(X^{t,x}_T)]=U(t,x)$.
\end{enumerate}
Finally, if $g$ is bounded from below by some $\ell<0$, then $\widetilde g:= g-\ell$ is bounded from below by $0$. The proof above immediately implies that $\widetilde U(t,x) := \E^{t,x}[\widetilde g(X^{t,x}_T)] = \E^{t,x}[g(X^{t,x}_T)] -\ell$ belongs to $C^{1,2}([0,T)\times(0,\infty))\cap\hat D$ and satisfies \eqref{PDE X g} with boundary conditions given by $\widetilde g$. Thus, $U(t,x)=\E^{t,x}[g(X^{t,x}_T)]$ belongs to $C^{1,2}([0,T)\times(0,\infty))\cap\hat D$ and satisfies \eqref{PDE X g}.
\end{proof}


\subsection{Continuity of $U$ up to the Boundary} \label{subsec:continuity at bdd}
In \cite[Theorem 3.2]{ET09}, interior smoothness of $U$ and the continuity of $U $ up to the boundary are obtained through a monotone smooth approximation. In our case with weaker H\"{o}lder continuity of $\sigma$, we treat interior smoothness of $U$ and its continuity up to the boundary separately. As shown in Lemma~\ref{lem:smoothness}, we derive the interior smoothness via a different smooth approximation (which may not be monotone). To establish the continuity up to the boundary, we will resort to tools from viscosity solutions developed in \cite{BS12}, which do not rely on any smoothness (or smooth approximation) of $U$. 

Let us first recall the notation and results in \cite{BS12}. Analogous to Definition~\ref{defn:stoch sol X}, we say a Borel function $u:[0,T]\times [0,\infty) \mapsto\R$ is a stochastic subsolution (resp. stochastic supersolution) to \eqref{PDE X g} if
\begin{itemize}
\item [(i)]  $u\left(r\wedge T,X^{t,x}_{r\wedge T}\right)$ is a submartingale (resp. supermartingale), for any weak solution to \eqref{SDE X} with initial condition $(t,x)\in[0,T]\times \R$.
\item [(ii)] for any $x\in (0, \infty)$, $u(T,x)\le g(x)$ (resp. $u(T,x)\ge g(x)$); for any $t\in(0,T]$, $u(t,0)\le g(0)$ (resp. $u(t,0)\ge g(0)$).
\end{itemize}
For any $g:[0,\infty)\mapsto\R$, we denote by $\mathcal{U}^-_g$ (resp. $\mathcal{U}^+_g$) the collection of all lower semicontinuous stochastic subsolutions (resp. upper semicontinuous stochastic supersolutions) to \eqref{PDE X g}. 
When $g$ is bounded from below, the collection $\mathcal{U}^-_g$ is nonempty. Indeed, for any $c\in\R$ such that $g(x)\ge c$ for all $x\in[0,\infty)$, $u(t,x)\equiv c$ lies in $\mathcal{U}^-_g$. For any $u\in\mathcal{U}^-_g$, $u(t,x)\le \E^{t,x}[g(X^{t,x}_T)]$ by definition. It follows that
\begin{equation}\label{v^-<U}
v^-_g(t,x):=\sup_{u\in\mathcal{U}^-_g}u(t,x)\le\E^{t,x}[g(X^{t,x}_T)].
\end{equation}
Note that $v^-_g$ is by definition lower semicontinuous. If $g$ is additionally lower semicontinuous, we may apply the same argument in \cite[Theorem 2.1]{BS12} to show that
\begin{equation}\label{v^->g}
v^-_g(T,x)\ge g(x).
\end{equation}

The next result shows that $U$ is continuous up to the parabolic boundary of $[0,T)\times(0,\infty)$. It is worth noting that no regularity of $\sigma$, not even continuity, is required below.

\begin{lemma}\label{lem:p-boundary condition}
For any $T>0$ and any continuous $g:[0,\infty)\mapsto\R$ in $\hat D$ that is bounded from below, the stochastic solution $U$, defined in \eqref{defn U}, satisfies:
\begin{equation}\label{p-boundary condition}
\begin{split}
U^*(T,x)=U_*(T,x)=U(T,x)=g(x)\ \  &\hbox{for}\ x\in(0,\infty),\\
U^*(t,0)=U_*(t,0)=U(t,0)=g(0)\ \  &\hbox{for}\ t\in[0,T],
\end{split}
\end{equation} where $U^*$ and $U_*$ are the upper and lower semicontinuous envelopes of $U$: 
\begin{align*}
U^{*}(t,x) &: = \lim_{\delta\downarrow 0}\sup_{} \{U(s,y): s\in ((t-\delta)\vee 0,t],\ y\in((x-\delta)\vee 0,x+\delta)\},\\
U_{*}(t,x) &: = \lim_{\delta\downarrow 0}\inf\{U(s,y): s\in ((t-\delta)\vee 0,t],\ y\in((x-\delta)\vee 0,x+\delta)\}.
\end{align*}
\end{lemma}

\begin{proof}
We will prove the desired result first for $g$ satisfying some additional properties, and then extend it to the general case.
\begin{itemize}
\item [1.] {\it $g$ is concave.} Since $g$ is bounded from below, $\mathcal{U}^-_g$ is nonempty. Moreover, being bounded from below and concave implies that $g$ is nondecreasing. In view of \eqref{v^-<U} and \eqref{defn U}, 
\[
v^-_g(t,x)\le U(t,x)\le g(\E^{t,x}[X^{t,x}_T])\le g(x),
\]
where the second inequality is due to Jensen's inequality, and the third inequality follows from $X^{t,x}_\cdot$ being a supermartingale and $g$ being nondecreasing. 
By the continuity of $g$, the lower semicontinuity of $v^-_g$ and \eqref{v^->g}, we have $U^*(T,x)\le g(x)$ and $U_*(T,x)\ge v^-_g(T,x)\ge g(x)$. Since $U(T,x)=g(x)$ by definition, we conclude that $U^*(T,x)=U_*(T,x)=g(x)=U(T,x)$. On the other hand, since $g$ is nondecreasing, one must have
\[
0\le\E^{t,x}[g(X^{t,x}_T)-g(0)]=U(t,x)-g(0)\le g(x)-g(0).
\]
Then the continuity of $g$ yields $U^*(t,0)\le g(0)$ and $U_*(t,0)\ge g(0)$. Since $U(t,0)=g(0)$ by definition, we conclude that $U^*(t,0)=U_*(t,0)=g(0)=U(t,0)$.

\item [2.] {\it $g$ is a smooth function satisfying}
$\int_0^\infty|g''(y)|dy<\infty$.
As observed in \cite[p.1375]{ET09}, $g=g_1-g_2$ for some nonnegative concave functions $g_1$ and $g_2$. By Step 1, $U_1(t,x):=\E^{t,x}[g_1(X^{t,x}_T)]$ and $U_2(t,x):=\E^{t,x}[g_2(X^{t,x}_T)]$ both satisfy \eqref{p-boundary condition}. Also, by the definition of semicontinuous envelopes,
\begin{align*}
U^*(t,x)&=(U_1-U_2)^*(t,x)\le U_1^*(t,x)-(U_2)_*(t,x),\\
U_*(t,x)&=(U_1-U_2)_*(t,x)\ge (U_1)_*(t,x)-U_2^*(t,x).
\end{align*}
This implies that $U^*(T,x)\le g_1(x)-g_2(x)=g(x)$, $U^*(t,0)\le g_1(0)-g_2(0)=g(0)$, $U_*(T,x)\ge g_1(x)-g_2(x)=g(x)$, and $U_*(t,0)\le g_1(0)-g_2(0)=g(0)$. Since $U(T,x)=g(x)$ and $U(t,0)=g(0)$ by definition, we conclude that $U$ satisfies \eqref{p-boundary condition}.

\item [3.] {\it The general case: $g$ is continuous.} We can take two sequences $\{g_n\}$ and $\{g^n\}$ of smooth functions satisfying $\int_0^\infty|g''(y)|dy<\infty$ such that $g_n\uparrow g$ and $g^n\downarrow g$. By Step 2, $U_n(t,x):=\E^{t,x}[g_n(X^{t,x}_T)]$ and $U^n(t,x):=\E^{t,x}[g^n(X^{t,x}_T)]$ both satisfy \eqref{p-boundary condition}, for all $n\in\N$. Then $U=\lim_{n\to\infty}U_n=\sup_n U_n$ and $U=\lim_{n\to\infty}U^n=\inf_n U^n$, thanks to the theorems of monotone convergence and dominated convergence, respectively. Noting that $\inf_n (U^n)^*$ is upper semicontinuous and $\sup_n (U^n)_*$ is lower semicontinuous, we have
\begin{equation*}
U^*(t,x)=(\inf_n U^n)^*(t,x)\le \inf_n (U^n)^*(t,x),\quad U_*(t,x)=(\sup_n U_n)_*(t,x)\ge \sup_n (U_n)_*(t,x).
\end{equation*}
Therefore, $U^*(T,x)\le \inf_n (U^n)^*(T,x)=\inf_n g^n(x)=g(x)$ and $U^*(t,0)\le \inf_n (U^n)^*(t,0)=\inf_n g^n(0)=g(0)$. Similarly, $U_*(T,x)\ge \sup_n (U_n)_*(T,x)=\sup_n g_n(x)=g(x)$ and $U_*(t,0)\ge \sup_n (U_n)_*(t,0)=\sup_n g_n(0)=g(0)$. Since $U(T,x)=g(x)$ and $U(t,0)=g(0)$ by definition, we conclude that $U$ satisfies \eqref{p-boundary condition}.
\end{itemize}
\end{proof}

\begin{remark}
To the best of our knowledge, our method of deriving continuity up to the boundary, which is based on the stochastic Perron's method in \cite{BS12},  is new in the literature.
\end{remark}

Lemmas~\ref{lem:smoothness} and~\ref{lem:p-boundary condition} together lead to:

\begin{theorem}\label{thm:E[] is classical}
Suppose $\sigma$ is locally H\"older continuous on $(0,\infty)$ with exponent $\delta\in(0,1]$. For any $T>0$ and any continuous $g:[0,\infty)\mapsto\R$ in $\hat D$ that is bounded from below, the stochastic solution $U$, defined in \eqref{defn U}, is a classical solution to \eqref{PDE X g} in $\hat D$.
\end{theorem}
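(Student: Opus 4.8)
The plan is to assemble the two lemmas just proved; no genuinely new argument is needed. Recall that, for the Cauchy problem \eqref{PDE X g} (the specialization of \eqref{PDE} to $G=(0,\infty)$, $\bar G=[0,\infty)$), being a classical solution in $\hat D$ means: $U\in C^{1,2}([0,T)\times(0,\infty))\cap C([0,T]\times[0,\infty))$, $U$ satisfies the three lines of \eqref{PDE X g} in the classical sense, and $U\in\hat D$. I will check each requirement in turn.

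First I would invoke Lemma~\ref{lem:smoothness}: under local H\"older continuity of $\sigma$ with exponent $\delta\in(0,1]$ and $g\colon[0,\infty)\to[0,\infty)$ continuous and in $\hat D$, the stochastic solution $U$ belongs to $C^{1,2}([0,T)\times(0,\infty))\cap\hat D$, satisfies $\partial_t U+\frac12\sigma^2\partial_{xx}U=0$ on $(0,T)\times(0,\infty)$, and solves \eqref{PDE X g}. In particular the pointwise boundary identities $U(T,x)=g(x)$ for $x>0$ and $U(t,0)=g(0)$ for $t\in[0,T]$ hold; these are in any case immediate from the definition $U(t,x)=\E^{t,x}[g(X^{t,x}_T)]$ together with the standing assumption \eqref{standing asm}, which forces $X^{t,0}\equiv 0$, and $X^{T,x}_T=x$. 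Thus the only ingredient still missing is continuity of $U$ on the closed domain $[0,T]\times[0,\infty)$.

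For this I would argue as follows. The set $[0,T)\times(0,\infty)$ is open in $[0,T]\times[0,\infty)$, and on it $U$ is $C^{1,2}$, hence continuous, by Lemma~\ref{lem:smoothness}. Its complement in $[0,T]\times[0,\infty)$ is exactly the parabolic boundary $(\{T\}\times[0,\infty))\cup([0,T]\times\{0\})$; on this set Lemma~\ref{lem:p-boundary condition} asserts $U^*=U_*$, equal to $g(x)$ on the terminal slice $\{T\}\times(0,\infty)$ and to $g(0)$ on the spatial boundary $[0,T]\times\{0\}$ (the corner $(T,0)$ being covered by the second identity of \eqref{p-boundary condition}, and consistent with the first by continuity of $g$ at $0$). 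Since $U_*\le U\le U^*$ always holds, the equality $U^*=U_*$ at a point is precisely continuity of $U$ there relative to $[0,T]\times[0,\infty)$; patching the open interior piece together with the closed boundary piece yields $U\in C([0,T]\times[0,\infty))$.

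Combining, $U\in C^{1,2}([0,T)\times(0,\infty))\cap C([0,T]\times[0,\infty))\cap\hat D$ and satisfies \eqref{PDE X g} in the classical sense, which is the assertion. The substantive work was done in Lemmas~\ref{lem:smoothness} and~\ref{lem:p-boundary condition}; here the only point requiring any care is the elementary topological bookkeeping of the previous paragraph, namely that pointwise coincidence of the upper and lower semicontinuous envelopes of $U$ on the parabolic boundary, together with $C^{1,2}$-regularity (hence continuity) on the open interior, implies continuity of $U$ on the entire closed domain.
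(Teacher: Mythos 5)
Your proposal is correct and takes the same approach as the paper, which proves this theorem simply by citing Lemmas~\ref{lem:smoothness} and~\ref{lem:p-boundary condition}. Your additional bookkeeping --- that interior $C^{1,2}$-regularity plus the coincidence $U^*=U_*$ at every parabolic boundary point yields continuity on the closed domain --- is the right (and elementary) gluing argument that the paper leaves implicit.
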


The main result of this section is the following characterization of the stochastic solution $U$. It in particular generalizes \cite[Theorem 2]{BX10}, as it requires less H\"older continuity of $\sigma$.

\begin{theorem}\label{thm:exist and unique}
Suppose $\sigma$ is locally H\"older continuous on $(0,\infty)$ with exponent $\delta\in(0,1]$. Then, \eqref{DS02} holds if and only if the stochastic solution $U$, defined in \eqref{defn U}, is the unique classical solution to \eqref{PDE X g} in $\hat D$, for any $T>0$ and any continuous $g:[0,\infty)\mapsto\R$ in $\hat D$ that is bounded from below.
\end{theorem}

\begin{proof}
By Theorem~\ref{thm:E[] is classical}, $U(t,x)=\E^{t,x}[g(X^{t,x}_T)]$ is a classical solution to \eqref{PDE X g} in $\hat D$, for any $T>0$ and any continuous $g:[0,\infty)\mapsto\R$ in $\hat D$ that is bounded from below. Then the necessity follows immediately from Corollary~\ref{coro:FK}. To prove the sufficiency, assume to the contrary that \eqref{DS02} is not true. By Proposition~\ref{prop:equiv}, $X^{t,x}$ must be a strict local martingale, and thus there exists $T^*>0$ such that $\E^{t,x}[X^{t,x}_{T^*}]<x$. Using Theorem~\ref{thm:E[] is classical} again, $\E^{t,x}[X^{t,x}_{T^*}]$ is a classical solution to \eqref{PDE X g} with $g(x):=x$ and $T:=T^*$. Then $x$ and $\E^{t,x}[X^{t,x}_{T^*}]$ are two distinct classical solution to \eqref{PDE X g} with $g(x):=x$ and $T:=T^*$, a contradiction.
\end{proof}

\begin{remark}\label{rem:compare KR13}
Concerning the stochastic representation of classical solutions, Theorem~\ref{thm:exist and unique} above and \cite[Proposition 5.2]{Karatzas-Ruf-13} share similar spirit, yet under quite different scenarios. In \cite{Karatzas-Ruf-13}, since the value function is the probability of explosion for a general diffusion, the authors focus on {\it bounded} solutions to a Cauchy problem with a non-zero drift. Here, our main concern is pricing European contingent claims under some risk-neutral measure. We therefore focus on {\it unbounded} solutions to a Cauchy problem with a zero drift.
\end{remark}


\subsection{A Comparison Theorem without Linear Growth Condition on $\sigma$} 
A comparison theorem states that if a supersolution $v$ is larger than a subsolution $u$ at the (parabolic) boundary, then $v\ge u$ on the entire domain. For such a theorem to hold, linear growth condition on $\sigma$ is a standard assumption, which ensures that the classical penalization method works; see e.g. \cite[p.76]{Pham-book-09}. In the following, we will replace the linear growth condition on $\sigma$ by the more general condition \eqref{DS02}. Our main contribution is the construction of a smooth function $\psi(x)$ in \eqref{psi for comparison} below, which is tailor-made so that a penalization can still work under \eqref{DS02}.

\begin{proposition}\label{prop:comparison}
Suppose $\sigma$ is continuous and satisfies \eqref{DS02}. Let $u\in\hat D$ (resp. $v\in\hat D$) be a classical subsolution (resp. supersolution) to
\[
-\partial_t w -\frac{1}{2} \sigma^2(x) \partial_{xx}w =0\ \ \hbox{on}\ [0,T)\times (0,\infty).
\]
If $u(T,x)\le v(T,x)$ for all $x\in(0,\infty)$ and $u(t,0)\le v(t,0)$ for all $t\in[0,T]$, then $u\le v$ on $[0,T]\times [0,\infty)$.
\end{proposition}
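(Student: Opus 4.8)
The plan is to run a penalization argument of the type in \cite[p.\,76]{Pham-book-09}, but with the classical penalty $1+x^2$ replaced by a barrier adapted to \eqref{DS02}. The first and crucial step is to exhibit the function $\psi$. I would set
\[
\psi(x):=1+\int_0^x\!\!\int_0^y \frac{2z}{\sigma^2(z\vee 1)}\,dz\,dy,
\]
so that $\psi\in C^2([0,\infty))$, $\psi\ge 1$, $\psi$ is increasing, and $\tfrac12\sigma^2(x)\psi''(x)=x$ for $x\ge 1$ (and is bounded by a multiple of $x$ on $[0,1]$). The key observation is that $\psi'(x)=\sigma^{-2}(1)+\int_1^x \tfrac{2z}{\sigma^2(z)}\,dz\to\infty$ exactly because of \eqref{DS02}; hence $\psi$ grows strictly faster than linearly (so $\psi(x)/x\to\infty$), while its ``elliptic defect'' $\tfrac12\sigma^2\psi''$ grows only linearly. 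Consequently both $x/\psi(x)$ and $\sigma^2(x)\psi''(x)/\psi(x)$ are bounded on $[0,\infty)$, and choosing $\mu>0$ large enough the function $\phi(t,x):=e^{\mu(T-t)}\psi(x)$ is a \emph{strict} classical supersolution on $[0,T)\times(0,\infty)$, namely $-\partial_t\phi-\tfrac12\sigma^2\partial_{xx}\phi\ge e^{\mu(T-t)}\psi(x)\ge 1>0$, with $\phi\ge 1$, $\phi(t,x)/x\to\infty$ uniformly in $t\in[0,T]$, and $\phi\ge0$ on the parabolic boundary.

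Given $\phi$, the second step is to prove $u\le v+\epsilon\phi$ on $[0,T]\times[0,\infty)$ for every $\epsilon>0$; since $\phi$ is finite, letting $\epsilon\downarrow 0$ then yields $u\le v$. Fix $\epsilon>0$ and suppose $\sup(u-v-\epsilon\phi)>0$. Because $u,v\in\hat D$ and $\phi(t,x)/x\to\infty$, the function $u-v-\epsilon\phi$ tends to $-\infty$ as $x\to\infty$ uniformly in $t$, so its supremum is attained at some $(\bar t,\bar x)$; the ordering of the boundary data together with $\phi\ge0$ forces $u-v-\epsilon\phi\le0$ on $\{T\}\times(0,\infty)\cup[0,T]\times\{0\}$, so $(\bar t,\bar x)\in[0,T)\times(0,\infty)$. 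For classical sub/supersolutions the contradiction is immediate: at the interior maximum of the smooth function $u-v-\epsilon\phi$ one has $\partial_t(u-v-\epsilon\phi)\le0$ and $\partial_{xx}(u-v-\epsilon\phi)\le0$, whence $-\partial_t(u-v-\epsilon\phi)-\tfrac12\sigma^2\partial_{xx}(u-v-\epsilon\phi)\ge 0$, which clashes with $[-\partial_t u-\tfrac12\sigma^2 u'']-\epsilon[-\partial_t\phi-\tfrac12\sigma^2\phi'']\le -\epsilon<0$. In a viscosity formulation one instead treats $\tilde u:=u-\epsilon\phi$ as a strict subsolution ($-\partial_t\tilde u-\tfrac12\sigma^2\partial_{xx}\tilde u\le-\epsilon$), doubles the space variable by maximizing $\tilde u(t,x)-v(t,y)-\tfrac{\alpha}{2}(x-y)^2$, and applies the parabolic Crandall--Ishii lemma at the maximizer, combining the two semijet inequalities with the strictness margin $\epsilon$ and sending $\alpha\to\infty$.

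I expect the main obstacle to be precisely the first step: recognizing that \eqref{DS02} is exactly the condition allowing a barrier with superlinear growth and only linearly growing elliptic defect, which is what the linear-growth hypothesis on $\sigma$ buys in the textbook argument; once $\psi$ (hence $\phi$) is in hand, the localization at spatial infinity and the sign bookkeeping are routine. A secondary, more delicate, point arises only in the viscosity version: since $\sigma$ is merely continuous (not Lipschitz), the Crandall--Ishii step must be localized to a compact neighbourhood of the interior maximizer $(\bar t,\bar x)$ — where $\sigma$ is bounded away from zero and uniformly continuous — and the resulting cross term still needs some care (e.g.\ a regularization of $u,v$ or a modulus-adapted penalty); the degeneracy of the equation at $x=0$ is harmless because the boundary ordering there keeps the maximizer inside $(0,\infty)$.
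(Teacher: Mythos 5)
Your proposal is correct and follows essentially the same strategy as the paper: build a barrier $\psi$ growing superlinearly under \eqref{DS02} with $\tfrac12\sigma^2\psi''$ only linear, then run the penalization at an interior maximum. The only cosmetic differences are your specific choice of $\psi$ (the paper takes $\psi(x)=x+\int_1^x \frac{(u-1)(x-u)}{\sigma^2(u)}\,du$ for $x>1$, which gives the same behaviour) and your use of a multiplicative factor $e^{\mu(T-t)}$ to make $\phi$ a strict supersolution, where the paper instead transforms $u,v$ by $e^{\lambda t}$ to introduce a zeroth-order term.
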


\begin{proof}
For $\lambda>0$, define $\tilde{u}:=e^{\lambda t}u$ and $\tilde{v}:=e^{\lambda t}v$. Clearly, $\tilde{u}$ and $\tilde{v}$ lie in $\hat D$, and $\tilde{u}$ (resp. $\tilde{v}$) is a classical subsolution (resp. supersolution) to
\begin{equation}\label{PDE lambda}
-\partial_t w+\lambda w-\frac{1}{2} \sigma^2(x) \partial_{xx}w =0.
\end{equation}
Consider the function $\phi(t,x):=e^{-\lambda t}\psi(x)$, where
\begin{equation}\label{psi for comparison}
\psi(x):=
\begin{cases}
x\ \ &\hbox{if}\ x\le 1,\\
x+\int_1^x \frac{(u-1)(x-u)}{\sigma^2(u)}du\ \ &\hbox{if}\ x>1.
\end{cases}
\end{equation}
It can be checked by direct calculations that $\psi$ is twice-differentiable on $(0,\infty)$, with $\psi''(x)$ equal to either $0$ when $x\le 1$, or $\frac{x-1}{\sigma^2(x)}$ when $x>1$. This, together with $\psi(x)\ge x$ by definition, implies
\[
-\partial_t\phi+\lambda \phi-\frac{1}{2} \sigma^2(x) \partial_{xx}\phi=e^{-\lambda t}\left(2\lambda\psi(x)-\frac{1}{2}\sigma^2(x)\psi''(x)\right)\ge 0,\ \ \ \hbox{for}\ \lambda\ge 1/4.
\]
It follows that, by fixing $\lambda\ge 1/4$, $\tilde{v}_\eps:=\tilde{v}+\eps\phi$ is a supersolution to \eqref{PDE lambda}, for all $\eps>0$.

On the other hand, observe that \eqref{DS02} implies $\int_1^\infty \frac{u-1}{\sigma^2(u)}du=\infty$ (as a simple application of L'H\^{o}pital's rule). 
We therefore have
\[
\lim_{x\to\infty}\frac{\eps\phi(t,x)}{x}=\lim_{x\to\infty}\eps e^{-\lambda t}\left(1+\int_1^x \frac{u-1}{\sigma^2(u)}du\right)=\infty\ \ \ \hbox{for any}\ \eps>0.
\]
We then deduce from this and the linear growth condition on $\tilde{u}$ and $\tilde{v}$ that
\begin{equation}\label{max at interior}
\lim_{x\to\infty}\sup_{[0,T]} (\tilde{u}-\tilde{v}_\eps)(t,x)=-\infty.
\end{equation}

Now, we assume that there exists some $\eps>0$ such that $\tilde{u}>\tilde{v}_\eps$ at some point in $[0,T]\times [0,\infty)$, and will work toward a contradiction. In view of \eqref{max at interior}, and the boundary and terminal conditions of $u$ and $v$, there must exist some $(\bar{t},\bar{x})\in[0,T)\times (0,\infty)$ such that
\begin{equation}\label{to contradict}
0<\sup_{[0,T]\times[0,\infty)}(\tilde{u}-\tilde{v}_\eps)=(\tilde{u}-\tilde{v}_\eps)(\bar{t},\bar{x}).
\end{equation}
It then follows from the first and second-order optimality conditions that
\[
[\partial_t\tilde{u}-\partial_t\tilde{v}_\eps](\bar{t},\bar{x})\le 0,\ \ [\partial_{xx}\tilde{u}-\partial_{xx}\tilde{v}_\eps](\bar{t},\bar{x})\le 0.
\]
Now, using the above inequalities and the fact that $\tilde{u}$ (resp. $\tilde{v}_\eps$) is a subsolution (resp. supersolution) to \eqref{PDE lambda}, we obtain
\begin{align*}
\lambda(\tilde{u}-\tilde{v}_\eps)(\bar{t},\bar{x})\le [\partial_t\tilde{u}-\partial_t\tilde{v}_\eps](\bar{t},\bar{x})+\frac{1}{2}\sigma^2(\bar{x})[\partial_{xx}\tilde{u}-\partial_{xx}\tilde{v}_\eps](\bar{t},\bar{x})\le 0,
\end{align*}
which contradicts \eqref{to contradict}.
\end{proof}

\begin{remark}
The function $\psi$ in \eqref{psi for comparison} also appeared, in a slightly different form, in the proofs of \cite[Theorem 1]{BX10} and \cite[Theorem 1.6]{DS02}. In both places, the function was used to prove certain uniform integrability property related to the process $X^{t,x}_\cdot$. Our contribution here is observing that this function is also instrumental to establishing a comparison theorem.
\end{remark}

By Theorems~\ref{thm:X true mart} and~\ref{thm:exist and unique}  and Proposition~\ref{prop:comparison}, we have the following:

\begin{theorem}\label{thm:full equiv}
Suppose $\sigma$ is locally H\"{o}lder continuous on $(0,\infty)$ with exponent $\delta\in(0,1]$. Then the following are equivalent:
\begin{itemize}
\item [(i)] \eqref{DS02} is satisfied.
\item [(ii)] $X$ is a true martingale.
\item [(iii)] For any $T>0$ and any continuous $g:[0,\infty)\mapsto\R$ in $\hat D$ that is bounded form below, \eqref{PDE X g} admits a unique classical solution in $\hat D$.
\item [(iv)] For any $T>0$ and any continuous $g:[0,\infty)\mapsto\R$ in $\hat D$ that is bounded form below, a comparison theorem for \eqref{PDE X g} holds among sub(super-)solutions in $\hat D$.
\end{itemize}
\end{theorem}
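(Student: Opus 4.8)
The plan is to assemble the equivalence from the results already established, closing a cycle of implications rather than proving any direction from scratch. As a preliminary observation, local H\"older continuity of $\sigma$ on $(0,\infty)$ makes $\sigma$ continuous there, so by \eqref{standing asm} the function $\sigma^{-2}$ is continuous and finite on $(0,\infty)$, hence locally bounded and thus locally integrable at every $x>0$; by Remark~\ref{rem:non-exploding} this is condition \eqref{H1'}, which is equivalent to \eqref{H1}. Hence \eqref{H1} holds throughout the argument, and in particular Theorem~\ref{thm:X true mart} applies and gives $(i)\Leftrightarrow(ii)$ at once.

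For $(i)\Rightarrow(iv)$: since $\sigma$ is continuous and, under $(i)$, satisfies \eqref{DS02}, this is exactly the content of Proposition~\ref{prop:comparison}. For $(iv)\Rightarrow(iii)$: existence of a classical solution in $\hat D$ is supplied by Theorem~\ref{thm:E[] is classical} (which needs only $\delta\in(0,1]$ and $g$ continuous, nonnegative, in $\hat D$); for uniqueness, if $u_1,u_2\in\hat D$ are both classical solutions to \eqref{PDE X g}, then each is simultaneously a subsolution and a supersolution to $-\partial_t w-\frac12\sigma^2\partial_{xx}w=0$ with the same terminal and boundary data, so the comparison statement $(iv)$ applied twice yields $u_1\le u_2$ and $u_2\le u_1$, i.e. $u_1=u_2$.

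It remains to close the loop with $(iii)\Rightarrow(i)$, the only implication carrying any content beyond bookkeeping. Arguing by contraposition, suppose \eqref{DS02} fails; then by $(i)\Leftrightarrow(ii)$ the process $X$ is a strict local martingale, so $\E^{t,x}[X^{t,x}_T]<x$ for $x>0$. Applying Theorem~\ref{thm:E[] is classical} with $g(x)=x$ shows that $w_1(t,x):=\E^{t,x}[X^{t,x}_T]$ is a classical solution in $\hat D$ to \eqref{PDE X g} with that data; on the other hand $w_2(t,x):=x$ is trivially a classical solution to the same problem. Then $w_2-w_1$ is a nonnegative function in $\hat D$, not identically zero, which solves the interior equation, vanishes at $t=T$ (where $w_1=x$) and at $x=0$ (where $w_1=0$); hence, taking the data $g\equiv0$, both the zero function and $w_2-w_1$ are classical solutions in $\hat D$ to \eqref{PDE X g}, contradicting the uniqueness asserted in $(iii)$. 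Chaining $(iii)\Rightarrow(i)\Rightarrow(iv)\Rightarrow(iii)$ together with $(i)\Leftrightarrow(ii)$ gives the full equivalence. (Alternatively $(i)\Rightarrow(iii)$ is immediate from Theorem~\ref{thm:exist and unique}, but the cycle above is self-contained.)

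The substantive work lies not in this theorem but in the ingredients it invokes: the martingality criterion \eqref{DS02} (Theorem~\ref{thm:X true mart}), the interior smoothness and boundary continuity of $U$ that yield Theorem~\ref{thm:E[] is classical}, and above all the penalization built on the tailor-made function $\psi$ of \eqref{psi for comparison} driving Proposition~\ref{prop:comparison} without a linear growth bound on $\sigma$. Given those, the one point needing genuine care here is the direction ``uniqueness $\Rightarrow$ comparison'', realized through $(iii)\Rightarrow(i)\Rightarrow(iv)$: when \eqref{DS02} fails one must exhibit an explicit second classical solution for zero terminal data, and this is precisely the strict-local-martingale defect $x-\E^{t,x}[X^{t,x}_T]$.
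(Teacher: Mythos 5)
Your proof is correct and takes essentially the same route as the paper, which establishes the theorem simply by citing Theorem~\ref{thm:X true mart}, Theorem~\ref{thm:exist and unique}, and Proposition~\ref{prop:comparison}. Your explicit cycle $(iii)\Rightarrow(i)\Rightarrow(iv)\Rightarrow(iii)$ together with $(i)\Leftrightarrow(ii)$ is exactly the assembly the paper leaves implicit, and your inline re-derivation of $(iii)\Rightarrow(i)$ is just the necessity half of Theorem~\ref{thm:exist and unique}, which you yourself note can be cited directly.
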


\begin{remark}
While it is obvious that a comparison theorem implies the uniqueness of solutions, the converse in general may not be true. These two concepts, however, are equivalent in our case, as shown by ``(iii) $\Leftrightarrow$ (iv)'' in Theorem~\ref{thm:full equiv}.
\end{remark}

\begin{remark}
In a stochastic volatility model, a relation similar to ``(ii) $\Leftrightarrow$ (iii)'' in Theorem~\ref{thm:full equiv} has been established in \cite[Theorems 2.8 and 2.9 (ii)]{BKX12}, under additional regularity and growth condition on the coefficients of the associated state dynamics.
\end{remark}


\section{Characterizing the Stochastic Solution without Smoothness}\label{sec:U as a limit}

In this section, we will work under the assumption that $\sigma$ is continuous, but may not be locally H\"{o}lder continuous on $(0,\infty)$ with a fixed exponent $\delta\in(0,1)$. 
Therefore we can not use Lemma~\ref{lem:smoothness} to show that the stochastic solution $U(t,x):=\E^{t,x}[g(X^{t,x}_T)]$ is smooth and hence the characterization for $U$ provided in Theorem~\ref{thm:exist and unique} is not applicable. Our goal is to find a new characterization for $U$ which relies on continuity of $\sigma$ only.


\subsection{Discussion on Viscosity Characterization of $U$ and Stochastic Perron's Method} In view of the characterization of $U$ in Theorem~\ref{thm:E[] is classical}, it is natural to ask whether $U$ can be characterized as the unique viscosity solution to \eqref{PDE X g} when $U$ may fail to be smooth.

To derive the interior viscosity solution property, some form of dynamic programming principle is usually needed. Since \eqref{PDE X g} is a linear equation, the dynamic programming principle reduces to the strong Markov property, which is inherent in Definition~\ref{defn:stoch sol X}. Indeed, under the assumption that $\sigma$ is continuous, one may deduce from Definition~\ref{defn:stoch sol X} that $U^*$ (resp. $U_*$) is a viscosity subsolution (resp. supersolution) to
\begin{equation}\label{vis X g}
-\partial_t u-\frac{1}{2}\sigma^2(x)\partial_{xx}u=0,\ \ (t,x)\in[0,T)\times (0,\infty).
\end{equation}
This, together with Lemma~\ref{lem:p-boundary condition}, already shows that
\begin{itemize}
\item [] $U^*$ is a viscosity subsolution to \eqref{vis X g}, with $U^*(T,\cdot)=g(\cdot)$ and $U^*(\cdot,0)=g(0)$.
\item [] $U_*$ is a viscosity supersolution to \eqref{vis X g}, with $U_*(T,\cdot)=g(\cdot)$ and $U_*(\cdot,0)=g(0)$.
\end{itemize}
Deriving a viscosity comparison theorem for \eqref{PDE X g} is then the final step for characterizing $U$ as the unique viscosity solution to \eqref{PDE X g}. 

Another interesting approach is stochastic Perron's method introduced in \cite{BS12}. The authors discover that, when $g$ is bounded, the function $v^-_g(t,x)$ in \eqref{v^-<U} and $v^+_g(t,x):= \inf\{u(t,x):u\in\mathcal{U}^+_g\}$ are well-defined, and satisfy the viscosity subsolution property and viscosity supersolution property, respectively, for the equation \eqref{vis X g}; recall the notation in Subsection~\ref{subsec:continuity at bdd}. By construction, $v^-_g\le \E^{t,x}[g(X^{t,x}_T)]\le v^+_g$ holds for any weak solution to \eqref{SDE X}. As a result, a viscosity comparison theorem for \eqref{PDE X g} again characterizes $U$ as the unique viscosity solution to \eqref{PDE X g}. Note that this approach does not even require a priori the uniqueness in distribution of weak solutions to \eqref{SDE X}.

While a viscosity comparison theorem can indeed be proved, we notice that it does not serve our needs. 
To derive a viscosity comparison theorem, a local Lipschitz condition on $\sigma$ is indispensable (this is in contrast to deriving a comparison theorem for classical solutions); see e.g. \cite[Section 4.4]{Pham-book-09}. Under the local Lipschitz condition on $\sigma$ and \eqref{DS02}, one can establish a viscosity comparison theorem by using the penalization in the proof of Proposition~\ref{prop:comparison}, a dedoubling variable technique (see e.g. \cite[p.78]{Pham-book-09}), and Ishii's lemma (see e.g. \cite[Lemma 4.4.6 and Remark 4.4.9]{Pham-book-09}). However, a local Lipschitz condition on $\sigma$ already implies that $U$ is smooth (by Theorem~\ref{thm:E[] is classical}), and we can thus characterize $U$ as the unique classical solution to \eqref{PDE X g} under \eqref{DS02}, as in Theorem~\ref{thm:exist and unique}. Deriving a viscosity comparison theorem is then superfluous. In view of this, we choose not to pursue viscosity characterization any further. 


\subsection{Characterize $U$ as a Limit of Smooth Stochastic Solutions}
Assume that $\sigma$ is a positive continuous function defined on $(0,\infty)$. We approximate $\sigma$ by a sequence $\{\sigma_n\}$ of positive continuous functions such that
\begin{equation}\label{sigma_n-sigma}
\begin{split}
\hbox {(i)}&\ \sigma_n\ \hbox{is locally H\"{o}lder continuous on $(0,\infty)$ with some exponent}\ \delta_n\in(0,1];\\
\hbox{(ii)}&\ \sigma_n\uparrow\sigma;\\
\hbox{(iii)}&\ \hbox{for any compact}\ K\subset (0,\infty),\ \max_{x\in K}\left\{\frac{1}{\sigma_n^2(x)}-\frac{1}{\sigma^2(x)}\right\}<\frac{1}{n}.
\end{split}
\end{equation}
Such an approximating sequence $\{\sigma_n\}$ does exist. For example, by using \cite[Theorem A]{GS92}, we may take a sequence $\{f_n\}$ of piecewise polynomials defined on $(0,\infty)$ such that $f_n\downarrow \frac{1}{\sigma^2}$ and $\max_{x\in K}\left\{f_n(x)-\frac{1}{\sigma^2(x)}\right\}<\frac{1}{n}$ for any compact set $K\subset (0,\infty)$. Then it can be checked that $\sigma_n:= 1/\sqrt{f_n}$ 
satisfies all the conditions in \eqref{sigma_n-sigma}.

Let $X^{(n),t,x}$ denote the unique (in distribution) weak solution to \eqref{SDE X}, with the diffusion coefficient $\sigma_n$. We have the following convergence result.

\begin{lemma}\label{lem:X^n to X}
Suppose $\sigma$ is a positive continuous function defined on $(0,\infty)$. For any $T>0$ and $(t,x)\in[0,T]\times[0,\infty)$, $X^{(n),t,x}_T\to X^{t,x}_T$ in distribution.
\end{lemma}

\begin{proof}
Let $B^{t,x}$ be a one-dimensional Brownian motion defined on some filtered probability space $(\Omega,\F,\{\F_s\}_{s\ge t},\P)$, starting with $B^{t,x}_t=x>0$. Let us extend the domain of $\sigma$ (resp. $\sigma_n$) to $\R$ by setting $\sigma(x)=0$ (resp. $\sigma_n(x)=0$) for all $x\le 0$. Define
\[
\Pi_s:=\int_t^s\frac{1}{\sigma^2(B^{t,x}_u)}du,\quad \Pi^{(n)}_s:=\int_t^s\frac{1}{\sigma_n^2(B^{t,x}_u)}du,\quad s\ge t,
\]
and consider
\begin{equation}\label{A's}
\begin{split}
& A_s:=\inf\left\{r\ge t: \Pi_r\ge s\right\},\quad A^{(n)}_s:=\inf\{r\ge t: \Pi^{(n)}_r\ge s\},\quad s\ge t;\\
& A_\infty:=\inf\left\{r\ge t: \Pi_r= \infty\right\},\quad A^{(n)}_\infty:=\inf\{r\ge t: \Pi^{(n)}_r= \infty\}.
\end{split}
\end{equation}
Define $\tau:=\inf\{s\ge t: B^{t,x}_s=0\}$. Since local integrability of $\sigma$ and $\sigma_n$ holds on $(0,\infty)$, but fails at $x=0$, \cite[Lemma 5.5.2]{KS-book-91} asserts that $\tau=A_\infty = A^{(n)}_\infty$ $\P$-a.s. Also, in view of the proof of \cite[Theorem 5.5.4]{KS-book-91}, $\{B^{t,x}_{A_s}\}_{s\ge t}$ (resp. $\{B^{t,x}_{A^{(n)}_s}\}_{s\ge t}$) is a weak solution to \eqref{SDE X} (resp. \eqref{SDE X} with the diffusion coefficient $\sigma_n$) with $I=(0,\infty)$.

Since $\sigma_{n } \uparrow \sigma$ on $(0,\infty)$ and $A^{(n)}_T \le A^{(n)}_\infty =\tau$ $\P$-a.s. for all $n\in \N$, we deduce from \eqref{A's} that $A^{(n)}_{T} \le A^{(n+1)}_{T}  \le A_{T}$ $\P$-a.s. for all $n\in\N$. This implies that the limit 
\begin{equation}\label{A^infty_T}
A^{(\infty)}_T :=\ \uparrow \lim_{n\to\infty} A^{(n)}_{T}\le A_T\quad \P\hbox{-a.s.}
\end{equation}
We claim that $A^{(\infty)}_T = A_T$ $\P$-a.s. In the following, we deal with two different cases separately. For simplicity, we will omit the $\omega$-dependence  in the random variables to be used  in the sequel.

\begin{enumerate}
\item[1.] {\it There exists $N\in\N$ such that $A^{(N)}_T=\tau$.} Since $A^{(n)}_T\le A^{(n)}_\infty=\tau$ for all $n\in\N$ and $n\mapsto A^{(n)}_T$ is nondecreasing, we must have $A^{(n)}_T=\tau$ for all $n\ge N$, and thus $A^{(\infty)}_T=\tau$. Recalling that $A_T\le A_\infty=\tau$ and $A_T\ge A^{(\infty)}_T$ in \eqref{A^infty_T}, we conclude that $A_T=\tau=A^{(\infty)}_T$.
\item[2.] {\it $A^{(n)}_T<\tau$ for all $n\in\N$.} Then, we deduce from \eqref{A's} that 
\begin{equation}\label{T=}
T=\Pi^{(n)}_{A^{(n)}_{T}} = \int_t^{A^{(n)}_T}\frac{1}{\sigma_n^{2}(B^{t,x}_u)}du\quad \hbox{for all}\ n\in\N. 
\end{equation}
Moreover, for each $n\in\N$, the interval
\[
K_n:=\bigg[\min_{u\in[t,A^{(n)}_T]} B^{t,x}_u, \max_{u\in[t,A^{(n)}_T]} B^{t,x}_u\bigg]
\]
is a compact subset of $(0,\infty)$. Thus, by condition (iii) in \eqref{sigma_n-sigma} (applying to $K_n$), 
\begin{align*}
T= \int_t^{A^{(n)}_T}\frac{1}{\sigma_n^2(B^{t,x}_u)}du &\le \int_t^{A^{(n)}_T}\frac{1}{\sigma^2(B^{t,x}_u)}du+\frac{1}{n}(A^{(n)}_T-t)\\
&\le \int_t^{A^{(\infty)}_T}\frac{1}{\sigma^2(B^{t,x}_u)}du+\frac{1}{n}(A^{(\infty)}_T-t).
\end{align*}
Since $A^{(\infty)}_T\le A_{T}\le\tau$ and $\tau$ is by definition finite $\P$-a.s. (as a basic property of a Brownian motions), letting $n\to\infty$ in the above inequality gives $T\le \int_t^{A^{(\infty)}_T}\frac{1}{\sigma^{2}(B^{t,x}_u)}du$. 
Using $A_T\le A_\infty = \tau$ and \eqref{A's} as in \eqref{T=}, we get $T= \Pi_{A_{T}} = \int_t^{A_T}\frac{1}{\sigma^{2}(B^{t,x}_u)}du$. We therefore obtain
\[
\int_t^{A_T}\frac{1}{\sigma^2(B^{t,x}_u)}du\le T\le \int_t^{A^{(\infty)}_T}\frac{1}{\sigma^2(B^{t,x}_u)}du,
\]
which implies $A_T\le A^{(\infty)}_T$. 
\end{enumerate}
We conclude that $A_T= A^{(\infty)}_T$ $\P$-a.s., and thus $B^{t,x}_{A^{(n)}_T} \to B^{t,x}_{A_T}$ $\P$-a.s. as $n \to \infty$.
Let us now complete the proof. Recall that $\{B^{t,x}_{A_s}\}_{s\ge t}$ (resp. $\{B^{t,x}_{A^{(n)}_s}\}_{s\ge t}$) is a weak solution to \eqref{SDE X} (resp. \eqref{SDE X} with the diffusion coefficient $\sigma_n$). Uniqueness in distribution of weak solutions imply that
\begin{equation*}
\hbox{Law}(X^{t,x}_T)=\hbox{Law}(B^{t,x}_{A_T}),\ \ \ \hbox{Law}(X^{(n),t,x}_T)=\hbox{Law}(B^{t,x}_{A^{(n)}_T}).
\end{equation*}
This, together with the almost surely convergence of $B^{t,x}_{A^{(n)}_T}$ to $B^{t,x}_{A_T}$, shows that $X^{(n),t,x}_T$ converges to $X^{t,x}_T$ in distribution.
\end{proof}

\begin{theorem}\label{thm:U as a limit}
Let $\sigma$ be a positive continuous function defined on $(0,\infty)$, and $\{\sigma_n\}$ be specified as in \eqref{sigma_n-sigma}. For any $T>0$ and any continuous $g:[0,\infty)\mapsto\R$ in $\hat D$ that is bounded from below, define $U(t,x):=\E^{t,x}[g(X^{t,x}_T)]$ and $U_n^M(t,x):=\E^{t,x}[g(X^{(n),t,x}_T)\wedge M]$ for all $n,M\in\N$. Then,
\[
U(t,x)=\lim_{M\to\infty}\lim_{n\to\infty}U_n^M(t,x).
\]
\end{theorem}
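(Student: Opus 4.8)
The plan is to pass to the limit inside the expectation in two stages: first let $n\to\infty$ with $M$ fixed, then let $M\to\infty$. The only substantial input is Lemma~\ref{lem:X^n to X} (already established above), so the remaining work is bookkeeping with standard convergence theorems, and I do not expect a genuinely new difficulty.

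\textbf{Step 1 (inner limit, $n\to\infty$).} Fix $M\in\N$ and set $\phi_M\colon[0,\infty)\to[0,M]$, $\phi_M(y):=g(y)\wedge M$, which is bounded and, since $g$ is continuous, continuous on $[0,\infty)$. Both $X^{(n),t,x}$ and $X^{t,x}$ are $[0,\infty)$-valued (by \eqref{sigma_n-sigma}(i) and \eqref{standing asm} they are nonnegative and absorbed at the origin), so Lemma~\ref{lem:X^n to X} together with the definition of weak convergence, applied to the bounded continuous test function $\phi_M$, gives
\[
\lim_{n\to\infty}U_n^M(t,x)=\lim_{n\to\infty}\E^{t,x}\big[\phi_M\big(X^{(n),t,x}_T\big)\big]=\E^{t,x}\big[\phi_M\big(X^{t,x}_T\big)\big]=:U^M(t,x).
\]

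\textbf{Step 2 (outer limit, $M\to\infty$).} Since $g\ge 0$, the quantity $g(X^{t,x}_T)\wedge M$ increases to $g(X^{t,x}_T)$ as $M\to\infty$, outcome by outcome, so the monotone convergence theorem yields
\[
\lim_{M\to\infty}U^M(t,x)=\E^{t,x}\big[g\big(X^{t,x}_T\big)\big]=U(t,x),
\]
which is finite because $g\in\hat D$ and $X^{t,x}$ is a nonnegative supermartingale, whence $U(t,x)\le K(1+x)$. Combining Steps 1 and 2 gives the asserted identity.

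\textbf{Main obstacle.} The argument is short precisely because Lemma~\ref{lem:X^n to X} does the heavy lifting; the one point that deserves care is why the truncation at level $M$ cannot be dropped. One would like to assert directly that $\E^{t,x}[g(X^{(n),t,x}_T)]\to\E^{t,x}[g(X^{t,x}_T)]$, but since $g$ has only linear growth this would require uniform integrability of $\{g(X^{(n),t,x}_T)\}_{n}$, which may fail: under the hypotheses of this section $\sigma$ is not assumed to satisfy \eqref{DS02}, so $X^{t,x}$ can be a strict local martingale and mass may escape to infinity in the weak limit. Cutting off at $M$ restores boundedness and legitimizes the weak-convergence passage, while the second stage recovers the untruncated value via monotone convergence. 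Consequently the order of the double limit ($n$ first, then $M$) is essential and cannot be interchanged without an additional uniform-integrability argument.
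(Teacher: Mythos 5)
Your proof is correct and is precisely the argument the paper has in mind: the paper's proof is a one-liner citing Lemma~\ref{lem:X^n to X} and monotone convergence, and your two steps simply spell out how those two ingredients combine (weak convergence applied to the bounded continuous test function $g\wedge M$, then monotone convergence in $M$). Your closing remark on why the truncation and the order of limits matter — namely that $\{g(X^{(n),t,x}_T)\}_n$ need not be uniformly integrable when \eqref{DS02} can fail — is a sound observation and consistent with the paper's framing.
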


\begin{proof}
The result follows immediately from Lemma~\ref{lem:X^n to X} and the monotone convergence theorem.
\end{proof}

\begin{remark}
If $g$ is additionally bounded from above in Theorem~\ref{thm:U as a limit}, then we have
\[
U(t,x)= \lim_{n\to\infty}U_n(t,x).
\]
\end{remark}

\begin{remark}\label{rem:U as a limit}
Assume additionally that $\sigma$ satisfies \eqref{DS02} in Theorem~\ref{thm:U as a limit}. Since $\sigma_n\uparrow\sigma$, $\sigma_n$ must also satisfy \eqref{DS02}. Then, thanks to the local H\"{o}lder continuity of $\sigma_n$, we see from Theorem~\ref{thm:exist and unique} that $U_n^M$ is the unique classical solution to \eqref{PDE X g} with $\sigma$ and $g$ replaced by $\sigma_n$ and $g\wedge M$ respectively.

In other words, under \eqref{DS02}, Theorem~\ref{thm:U as a limit} characterizes the stochastic solution $U$ to the Cauchy problem \eqref{PDE X g} (which may not be smooth) as a limit of the unique classical solutions to some approximating Cauchy problems.
\end{remark}





\section{Summary of Results}
For readers' convenience, we summarize the main results of this paper in the following list, under different regularity assumptions on $\sigma$.

\begin{itemize}
  \item[(A)] If $\sigma$ satisfies \eqref{standing asm'}, then the following are equivalent:
			\begin{itemize}[leftmargin=0.1in]
			\item $X$ is a martingale; 
			\item	$\sigma$ satisfies \eqref{DS02};
			\item for any $T>0$ and $g\in\hat D$, $U(t,x)$ is the unique local stochastic solution to \eqref{PDE X g} in $\hat D$.
			\end{itemize}
  \item[(B)]  If $\sigma$ satisfies \eqref{standing asm'} and is locally H\"older continuous with exponent $\delta\in (0, 1]$, then the following are equivalent:
			\begin{itemize}[leftmargin=0.1in]
			\item	$\sigma$ satisfies \eqref{DS02};
			\item for any $T>0$ and $g\in\hat D$ bounded from below, $U(t,x)$ is the unique classical solution to \eqref{PDE X g} in $\hat D$.
			\item for any $T>0$ and $g\in\hat D$ bounded from below, a comparison theorem for \eqref{PDE X g} holds in $\hat D$.
			\end{itemize}	
 \item[(C)] If $\sigma$ satisfies \eqref{standing asm'} and \eqref{DS02}, and is continuous , then $U(t,x) $ can be characterized as a limit of the unique classical solutions to some approximating Cauchy problems.   
\end{itemize}

\noindent{\bf\large Acknowledgement.}

 We would like to thank Johannes Ruf for his comments, which directed us to several useful references. We also thank two anonymous reviewers whose careful reading and many suggestions helped us to improve  the paper significantly.


\appendix

\section{Consistent Extension of a Brownian Motion}
In this appendix, we assume that $X$ is a progressively measurable c\`adl\`ag stochastic process defined on some probability space
$(\Omega, \mathcal F, \mathbb P, \{\mathcal F_{t}\}_{t\ge 0})$ satisfying the usual condition. Given a stopping time $\tau$,
we will use the notation 
\[
X_{t\wedge \tau-} = \lim_{s\uparrow (t\wedge \tau)} X_{s},\ t\ge 0\quad \hbox{and}\quad X_{\cdot\wedge \tau-} = \{X_{t\wedge \tau-}\}_{t\ge 0}.
\]
\begin{lemma}\label{l:02}
 Suppose $\{\tau_{n}\}_{ n \in \N}$ is an increasing sequence of stopping times such that $X_{\cdot \wedge \tau_{n}}$ is a.s. continuous for all $n\in \mathbb N$. Then, $X_{ \cdot \wedge \tau_{\infty}-}$ is a.s. continuous, with $\tau_\infty := \lim_{n\to\infty} \tau_n$.
\end{lemma}

\begin{proof}
 Let $A_{n} = \{\omega: X_{ \cdot \wedge \tau_{n}} \hbox{ is continuous} \}$ and $A_{\infty} = \{\omega: X_{ \cdot \wedge \tau_{\infty}-} \hbox{ is continuous} \}$. By definition, $A_{n} \supseteq A_{n+1} \supseteq A_{\infty}$ for all $n$. Also, for any $\omega\in \bigcap_{n=1}^{\infty} A_{n}$,  $t\mapsto X_{t\wedge \tau_{n}}(\omega)$ is continuous for all $n$. This implies the continuity of $t\mapsto X_{t\wedge \tau_{\infty}-}(\omega)$, as demonstrated in the following three cases:
\begin{enumerate}
 \item If $t< \tau_{\infty}$, then there exists $\hat n\in\N$ such that $t< \tau_{\hat n}$. Consequently  $X_{\cdot \wedge \tau_{\infty}-} = X_{\cdot \wedge \tau_{\hat n}}$ in a neighborhood of $t$. Hence, $X_{\wedge \tau_{\infty}-}$ is continuous at $t$.
\item If $\tau_{\infty} < t < \infty$, then $X_{\cdot\wedge \tau_{\infty}-}$ is continuous at $t$ simply because it is a constant function around $t$; specifically, $X_{r\wedge \tau_\infty -} = \lim_{s\uparrow\tau_\infty} X_s$ for all $r>\tau_\infty$. 
\item If $t= \tau_{\infty}< \infty$, then $X_{\cdot \wedge \tau_{\infty}-}$ is left continuous at $t$ by definition: $X_{t\wedge \tau_{\infty}-} = \lim_{s\uparrow \tau_{\infty}} X_{s} = X_{\tau_\infty -} = \lim_{s\uparrow t} X_{s\wedge \tau_{\infty}-}$. Right continuity of $X_{\cdot \wedge \tau_{\infty}-}$ at $t$ also holds, since $X_{\cdot \wedge \tau_{\infty}-}$ is a constant function after $t=\tau_{\infty}$, as explained in 2.
\end{enumerate}
Thus, we conclude that $\bigcap_{n=1}^\infty A_{n} = A_{\infty}$. Since $\mathbb P(A_{n}) = 1$ for all $n$, we must have $\mathbb P(A_{\infty}) = 1$.
\end{proof}

\begin{remark}
In Lemma~\ref{l:02}, if the condition ``{\it $X_{\cdot \wedge \tau_{n}}$ is a.s. continuous for all $n\in \mathbb N$}'' is replaced by ``{\it  $X_{\cdot \wedge \tau_{n}}$ is a.s. non-explosive}'', then in general one can not conclude that ``{\it $X_{ \cdot \wedge \tau_{\infty}-}$ is a.s. non-explosive}''. A counter-example is $X$ under $\mathbb Q_{\infty}$ in the proof of Theorem~\ref{thm:X true mart}.
\end{remark}

\begin{lemma}\label{l:01}
 Let $W$ be a progressively measurable process on $(\Omega, \mathcal F, \mathbb P, \{\mathcal F_{t}\}_{t\ge 0})$. Suppose $\{\tau_{n}\}_{n \in \N}$ is an increasing sequence of stopping times such that $W_{\cdot \wedge \tau_{n}}$ is a standard Brownian motion for all $n\in\N$. Then, $W_{ \cdot \wedge \tau_{\infty} -}$ is a standard Brownian motion, with $\tau_\infty := \lim_{n\to\infty} \tau_n$.
\end{lemma}

\begin{proof}
By Lemma~\ref{l:02}, $W_{\cdot\wedge \tau_{\infty}-}$ and $W^{2}_{\cdot\wedge \tau_{\infty}-} - (\cdot\wedge\tau_{\infty})$ are both a.s.  continuous. For each $n$,  $W_{\cdot\wedge \tau_{n}}$ and $W^{2}_{\cdot\wedge \tau_{n}} - (\cdot\wedge\tau_{n})$ are martingales, and thus $W_{\cdot\wedge \tau_{\infty}-}$ and $W^{2}_{\cdot\wedge \tau_{\infty}-} - (\cdot\wedge\tau_{\infty})$ are local martingales. Levy's characterization theorem then ensures that $W_{\cdot\wedge \tau_{\infty}-}$ is a Brownian motion.
\end{proof}

Lemma \ref{l:01} states that a Brownian motion consistently extended by a sequence of increasing stopping times $\tau_n$, with a limit $\tau_\infty$, determines a Brownian motion on $[0, \tau_{\infty})$; such an extension, however, may not hold up to $\tau_\infty$. In other words, in general  we  can not conclude that $W_{ \cdot \wedge \tau_{\infty}}$ is a Brownian motion.  To see this, let $B$ be a standard Brownian motion and put
$\tau_{n} := \inf\{t>0: |B_{t}|\ge n\}\wedge (1-\frac{1}{n})$ and $\bar B_{t} := B_{t} I_{\{t < \tau_{\infty}\}}$ for $t \ge 0$. Then $\tau_{n}\uparrow \tau_{\infty} =1$ a.s. and the process  $\bar B_{}$ satisfies the condition in Lemma~\ref{l:01}. But, $\bar B_{\cdot\wedge \tau_\infty}$ has a discontinuity at $\tau_{\infty}$: 
\[
\lim_{t\uparrow \tau_\infty} \bar B_{t\wedge \tau_\infty} = \lim_{t\uparrow \tau_\infty} B_{t} = B_1 \neq 0 = \bar B_{\tau_{\infty}\wedge \tau_\infty}\quad  \hbox{a.s.}
\] 
Likewise, we can not generalize Lemmas~\ref{l:02} in a similar fashion.


\section{H\"{o}lder Norms Used in Theorem~\ref{thm:E[] is classical}}\label{sec:Holder norms}
Given $D\subseteq[0,\infty)\times\R^n$ and $f: D\mapsto\R$, we set $|f|_0:=\sup_D|f|$. For each $k\in\N$ and $\delta\in(0,1]$, by writing $X=(t,x)$ and $Y=(s,y)$, we define the following H\"{o}lder (semi-)norms:
\begin{align*}
[f]_{k+\delta,D}&:=\sup_{X\neq Y\text{ in } D}\sum_{\beta+2j=k}\frac{|D^\beta_x D^j_t f(X)-D^\beta_x D^j_t f(Y)|}{|X-Y|^\delta},\\
\langle f\rangle_{k+\delta,D}&:=\sup_{X\neq Y\text{ in } D,\text{ }x=y}\sum_{\beta+2j=k-1}\frac{|D^\beta_x D^j_t f(X)-D^\beta_x D^j_t f(Y)|}{|t-s|^{\delta+1}},\\
|f|_{k+\delta,D}&:=\sum_{\beta+2j\le k}|D^\beta_x D^j_t f|_0 + [f]_{k+\delta,D}+\langle f\rangle_{k+\delta,D}.
\end{align*}
We will denote by $H_{k+\delta}(D)$ the collection of functions $f$ with $|f|_{k+\delta,D}<\infty$.

Let $\partial^*D$ denote the parabolic boundary of $D$. For each $X=(t,x)\in D$, define
\[d(X):=\inf\{|X-(t',x')|:x'\in\partial^*D, t'>t\}.
\]
And we set $d(X,Y):=\min\{d(X),d(Y)\}$ for all $X,Y\in D$. Then, we define the following weighted H\"{o}lder (semi-)norms:
\begin{align*}
[f]^*_{k+\delta,D}&:=\sup_{X\neq Y\text{ in } D}\sum_{\beta+2j=k}d(X,Y)^{k+\delta}\frac{|D^\beta_x D^j_t f(X)-D^\beta_x D^j_t f(Y)|}{|X-Y|^\delta},\\
\langle f\rangle^*_{k+\delta,D}&:=\sup_{X\neq Y\text{ in } D,\text{ }x=y}\sum_{\beta+2j=k-1}d(X,Y)^{k+\delta}\frac{|D^\beta_x D^j_t f(X)-D^\beta_x D^j_t f(Y)|}{|t-s|^{\delta+1}},\\
|f|^*_{k+\delta,D}&:=\sum_{\beta+2j\le k}|D^\beta_x D^j_t f|^{\beta+2j}_0 + [f]^*_{k+\delta,D}+\langle f\rangle^*_{k+\delta,D}.
\end{align*}
We will denote by $H^*_{k+\delta}(D)$ the collection of functions $f$ with $|f|^*_{k+\delta,D}<\infty$.


\bibliographystyle{apt}


\end{document}